\newcommand{\medint}{-\kern  -,375cm\int}
\theoremstyle{plain}
\newtheorem{theorem}{Theorem}[section]
\newtheorem{corollary}[theorem]{Corollary}
\newtheorem{lemma}[theorem]{Lemma}
\newtheorem{proposition}[theorem]{Proposition}
\theoremstyle{definition}
\theoremstyle{remark}
\newtheorem{remark}[theorem]{Remark}
\theoremstyle{plain}
\def\eps{\varepsilon}
\def\div{{\rm div}}
\newcommand{\R}{\mathbb R}
\newcommand{\Rn}{\mathbb{R}^n}
\newcommand\res{\mathop{\hbox{\vrule height 7pt width .5pt depth 0pt
\vrule height .5pt width 6pt depth 0pt}}\nolimits}
\newcommand{\cL}{\mathcal{L}}
\newcommand{\Ln}{\mathcal{L}^n}
\newcommand{\Om}{\Omega}
\newcommand{\A}{\mathbb {A}}
\newcommand{\EEE}{\mathscr{E}}
\newcommand{\de}{\partial}
\newcommand{\Lip}{{\rm {Lip}}}
\renewcommand{\div}{\textup{div}}
\newcommand\Sing{\textup{Sing}}
\newcommand\Reg{\textup{Reg}}
\newcommand{\aaa}{{\bf a}}
\newcommand{\graph}{\textup{graph}}
\numberwithin{equation}{section} \makeatletter
\renewcommand{\p@enumi}{\thesection.}
\makeatother \pagestyle{myheadings} \allowdisplaybreaks
\begin{document}
\title[Free boundary regularity for nonlinear obstacle problems]{
The classical obstacle problem\\ for nonlinear variational energies}

\author[M. Focardi, F. Geraci, E. Spadaro]{M. Focardi, F. Geraci, E. Spadaro}

\address{DiMaI
``U. Dini'', Universit\`a di Firenze,
V.le Morgagni 67/A, I-50134 Firenze}
\email{focardi@math.unifi.it, geraci@math.unifi.it}

\address{Max-Planck-Institut f\"ur Mathematik in den Naturwissenschaften, 
Inselstrasse 22, D-04103 Leipzig} 
\email{spadaro@mis.mpg.de}

\dedicatory{{Dedicated to Nicola Fusco, a mentor and a friend.}}

\begin{abstract}
We develop the complete free boundary analysis for solutions to classical obstacle problems related to nondegenerate 
nonlinear variational energies. The key tools are optimal $C^{1,1}$ regularity, which we review more generally 
for solutions to variational inequalities driven by nonlinear coercive smooth vector fields, and the 
results in \cite{FocGelSp15} concerning the obstacle problem for quadratic energies with Lipschitz coefficients. 

Furthermore, we highlight similar conclusions for locally coercive vector fields having in mind applications 
to the area functional, or more generally to area-type functionals, as well.
\end{abstract}

\maketitle

\section{Introduction}

Variational inequalities are a classical topic in partial differential equations starting with the seminal 
works of Fichera and Stampacchia in the early 60's, motivated by a wide variety of applications in mechanics
and other applied sciences. 
This subject has been developed over the last 50 years by the works of many authors; it is not realistic to 
give here a complete account: we rather refer to the books and surveys 
\cite{BaCa84,CS,Duvaut-Lions,Frie,KS,PSU,Ro,Troianiello,Uraltseva87} 
for a fairly vast bibliography and its historical developments.

To introduce the problem, let $\psi$ and $g$
be given functions in $W^{1,p}(\Omega)$, $p\in(1,\infty)$, 
with $g\geq \psi$ $\Ln$ a.e. on $\Omega$  and set
\begin{equation}\label{e:Kpsi}
{\mathbb{K}_{\psi,g}}:=\{v\in g+W^{1,p}_0(\Omega):\,v\geq\psi\quad\text{$\Ln$ a.e. on $\Omega$}\}.        
\end{equation}
Consider a smooth \emph{coercive vector field} $(a_0,\aaa):\Omega\times\R\times\R^n\to\R\times\R^n$ 
according to \cite[Definition~3.1 of Chapter~IV]{KS} and \cite[Chapter~4]{Troianiello} (cf. 
Section~\ref{s:cvf} for the precise definitions and the necessary assumptions). 
The existence of a solution $u\in {\mathbb{K}_{\psi,g}}$ of the problem
\begin{equation}\label{e:mainpb}
\int_\Omega\aaa(x,u,\nabla u)\cdot\nabla(v-u)\,dx+\int_\Omega a_0(x,u,\nabla u)(v-u)dx\geq 0
\qquad\text{for all $v\in {\mathbb{K}_{\psi,g}}$,}
\end{equation}
is well-known (cf. \cite[Section~4 of Chapter~III]{KS} if $p=2$ and \cite[Chapter~4]{Troianiello} otherwise)
{and shortly recalled in Section~\ref{s:cvf} below}. 
Under suitable hypotheses on the fields, classical results ensure optimal regularity for $u$, i.e. 
$u\in C^{1,1}_{loc}(\Omega)$, as long as $\psi\in C^{1,1}_{loc}(\Omega)$ (cf. for instance 
\cite[Sections~4.5-4.6]{Troianiello} in the quadratic case, and \cite{Uraltseva87} in general).

The prototype example we have in mind is that of nonlinear variational problems
\begin{equation}\label{e:mainpb2}
\min_{v\in{\mathbb{K}_{\psi,g}}}\int_\Omega F(x,v,\nabla v)\,dx
\end{equation}
that leads to a variational inequality of the form \eqref{e:mainpb} with $\aaa=\nabla_\xi F$ and $a_0=\partial_z F$,
under suitable assumptions on $F=F(x,z,\xi)$ such as global smoothness, convexity and $p$-growth in the last 
variable (cf. Theorem~\ref{t:main} below for the precise assumptions on $F$). 

The aim of this short note is to perform an exhaustive analysis of the \emph{free boundary}, i.e.~the set 
$\partial\{u=\psi\}$, for the broad class of obstacle problems introduced in \eqref{e:mainpb2}, 
and to establish a parallel with the known results in the quadratic case as developed by Caffarelli \cite{Ca98}, 
Weiss \cite{Weiss} and Monneau \cite{Monneau} (cf. Theorem~\ref{t:main} for the statement).

The sharp analysis and stratification of the free boundary we provide is an outcome of 
{a suitable linearization argument (cf. Lemma~\ref{l:tecnico2} below) and of} the analogous results 
for the classical obstacle problem for quadratic energies with Lipschitz coefficients recently proved in 
\cite{FocGelSp15} that we state in Section~\ref{s:prel} (cf. Theorem~\ref{t:mainFGS}). It corresponds to the case
$F(x,\xi)=\A(x)\xi\cdot\xi$ in \eqref{e:mainpb2}, with $\A\in \mathrm{Lip}(\Om,\R^{n\times n})$ 
defining a continuous and coercive quadratic form.
The lack of smoothness and homogeneity of the matrix of coefficients $\A$ in Theorem~\ref{t:mainFGS}
does not permit to exploit elementary freezing arguments to locally reduce the regularity 
problem above to the analogous one for smooth operators, for which a complete theory has 
been developed by Caffarelli in a long term program \cite{Ca77,Ca80,Ca98-Fermi,Ca98}. 
Building upon the variational approach to the classical obstacle problem developed by Weiss 
\cite{Weiss} and Monneau \cite{Monneau}, the strategy to prove Theorem~\ref{t:mainFGS} is 
energy-based and relies on quasi-mononicity formulas extending those of  Weiss \cite{Weiss} 
and Monneau \cite{Monneau}, on Weiss's epiperimetric inequality as well as on Caffarelli's 
fundamental blow up analysis \cite{Ca80}.

As a direct outcome of Theorem~\ref{t:mainFGS} we shall deduce the analogous result for solutions of 
\eqref{e:mainpb2} (cf. Theorem~\ref{t:main}). Furthermore, adding suitable assumptions on the data of 
the problem, we can provide similar conclusions in case the vector field $\nabla_\xi F$ is more generally 
\emph{locally coercive}, thus including in our analysis the important case of the area functional.

A short summary of the contents of the paper is resumed in what follows: Section~\ref{s:prel} is devoted
to fix the notation and state the conclusions of the free boundary analysis in the quadratic case 
following \cite{FocGelSp15}.
In Section~\ref{s:cvf} we introduce the necessary definitions to state the main result 
of the paper, Theorem~\ref{t:main},  and show how the latter follows directly from Theorem~\ref{t:mainFGS}. In doing this, we shall 
first review almost optimal and then optimal regularity in the broader setting of solutions to variational 
inequalities driven by coercive vector fields as in \eqref{e:mainpb} (cf. Theorems~\ref{t:noptreg}
and \ref{t:optreg}), and then develop in details the analysis of the free boundary in the variational case 
in \eqref{e:mainpb2}. 
Finally, in Section~\ref{s:lcvf} we highlight the required changes to deduce similar conclusions for the 
case of locally coercive vector fields, and also analyze the case of the area functional in a Riemannian
manifold.

Non-optimal regularity for solutions is a classical topic well-known in literature at least in the quadratic 
case $p=2$ that has been established in several fashions: by penalization methods (cf. \cite{LewyStampacchi69}, 
\cite{BrezisStampacchia68}, \cite{Brezis72}), by Lewy-Stampacchia inequalities (cf. \cite{MoscoTroianiello73}, 
\cite{Mosco79}, \cite{HanouzetJoly79}, \cite{FrehseMosco82}, \cite{Troianiello}), by local comparison methods 
(cf. \cite{Giaq81}), by introducing a substitute variational inequality (cf. \cite{Gustaffson86}), 
{and by the linearization method (see \cite{Fu90, FuMin00}).
By following the streamline of ideas of the latter technique introduced in \cite{Fu90}, we provide here an elementary variational proof valid in the 
general framework of nonlinear variational inequalities under investigation.}
In particular, we show that solutions of \eqref{e:mainpb} satisfies a nonlinear elliptic PDE in divergence form, 
in turn from this suboptimal regularity can be established (for further comments cf. Section~2). 

Finally, we are able to establish optimal regularity following Gerhardt \cite{Ger73} {(see \cite{Frehse72,BrezKinder73,CaffaKind80} for the classical results).}
In addition, we remark that solutions to \eqref{e:mainpb} are actually \emph{$Q$-minima} of a related 
functional according to Giaquinta and Giusti~\cite{GiaqGiu82,GiaqGiu84}.

Furthermore, in the case of the area functional one can prove that solutions to the obstacle problem are actually 
almost minimizers of the perimeter, thus leading by a well-known theory of minimal surfaces (cf.~\cite{Tamanini84}) 
to estimates on the gradient of the solutions which bypass the global approach by Hartman and 
Stampacchia~\cite{HartmanStampacchia66} exploiting the bounded slope condition and the construction of barriers.
\vskip0.25cm

To conclude this introduction M.F. would like to add some personal annotations. I had the luck of attending a 
PhD course on Calculus of Variations taught by Nicola Fusco when I was still a graduate student in Florence 
trying to find my way through Mathematics. I clearly remember Nicola's mastery of the subject, his enthusiasm 
in transmitting the beauty of many ideas, and the pleasant atmosphere in the classroom despite several difficult 
proofs and lengthy calculations. That course pushed my interest forward Calculus of Variations. The influence of 
Nicola on my professional life is still active nowadays: on one hand in a direct way having the possibility 
to collaborate with him, and on the other hand indirectly in studying and exploiting many important results of his.
All this written, it is a great pleasure for me to contribute with this note to celebrate Nicola's birthday.

\section{Preliminaries}\label{s:prel}

The scalar product in $\Rn$ is denoted by $\xi\cdot\eta$ for all $\xi$, $\eta\in\Rn$, while 
$\langle\cdot,\cdot\rangle$ is generically used to indicate a duality pairing of the relevant 
function spaces. We use standard notation for Lebesgue and Hausdorff measures, for Lebesgue and Sobolev spaces.

With $c$ we denote a positive constant that may vary from line to line, we shall always highlight 
the parameters on which the constant depends.

We state explicitly only the ensuing result since it will be instrumental for our purposes.
\begin{theorem}[Theorem~1.1 \cite{FocGelSp15}]\label{t:mainFGS}
Let $\Om\subset\Rn$ be smooth, bounded and open;
$\A\in \mathrm{Lip}(\Om,\R^{n\times n})$ be symmetric and uniformly elliptic,
i.e.~$\lambda^{-1}|\xi|^2\leq \A(x) \xi\cdot\xi\leq\lambda|\xi|^2$
for all $x\in \Omega$ and all $\xi \in \R^n$ for some $\lambda\geq1$; $f\in C^{0,\alpha}(\Om)$ for some 
$\alpha\in(0,1]$; $g\in W^{1/2,2}(\partial\Om)$; 
$\psi\in C^{1,1}_{loc}(\Omega)$ such that $\psi\leq g$ $\mathcal{H}^{n-1}$-a.e on 
$\partial\Omega$ with $\div(\A\nabla\psi)\in C^{0,\alpha}(\Om)$ in the sense 
of distributions and with $f - \div(\A\nabla\psi)\geq c_0 >0$ {for some constant $c_0$}.

Let $u$ be the (unique) minimizer of 
\[
\EEE[v]:=\int_\Omega\big(\A(x)\nabla v(x)\cdot\nabla v(x)
+2 f(x)\,v(x)\big)\,dx,
\]
on the set ${\mathbb{K}_{\psi,g}}$ introduced in \eqref{e:Kpsi} (with $p=2$).

Then, $u$ is $C^{1,{\tau}}_{loc}(\Om)$ for every $\tau\in (0,1)$, and the free boundary 
decomposes as $\de \{u = \psi\} \cap \Omega = \Reg(u) \cup \Sing(u)$, where 
$\Reg(u)$ and $\Sing(u)$ are called its regular and singular part, respectively. 
Moreover, $\Reg(u) \cap \Sing(u) = \emptyset$ and
\begin{itemize}
\item[(i)] $\Reg(u)$ is relatively open in $\de \{u = \psi\}$
and, for every point $x_0 \in \Reg(u)$, there exist $r=r(x_0)>0$
and $\beta= \beta(x_0) \in (0,1)$ such that
$\Reg(u) \cap B_r(x_0)$ is a $C^{1,\beta}$ submanifold of dimension $n-1$;
\item[(ii)] $\Sing(u) = \cup_{k=0}^{n-1} S_k$, with $S_k$ contained in the union of
at most countably many submanifolds of dimension $k$ and class $C^1$.
\end{itemize}
\end{theorem}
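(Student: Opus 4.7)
The plan is to follow the variational approach pioneered by Weiss and Monneau, carefully adapted to the Lipschitz-coefficient setting, and combine it with Caffarelli's blow-up classification. First I would establish the $C^{1,\tau}$ interior regularity for every $\tau\in(0,1)$ via the Euler-Lagrange variational inequality and standard elliptic estimates for divergence-form operators with Lipschitz coefficients. It is then convenient to pass to $w:=u-\psi\geq 0$, which minimizes an analogous quadratic functional over the convex set $\{v\in W^{1,2}(\Om):v\geq 0,\ v=g-\psi\ \text{on}\ \partial\Om\}$ with right-hand side $h:=f-\div(\A\nabla\psi)\in C^{0,\alpha}(\Om)$, $h\geq c_0>0$. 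This normalizes the obstacle to zero and packages all the regularity and non-degeneracy hypotheses into one scalar source term.

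The heart of the argument is a blow-up analysis. For each $x_0\in\partial\{w=0\}$ set $w_{x_0,r}(y):=r^{-2}w(x_0+ry)$, $y\in B_1$. Uniform $C^{1,1}$ estimates combined with Arzel\`a-Ascoli produce subsequential $C^{1,\gamma}_{loc}$ limits $w_\infty$, each a global solution of the obstacle problem with the frozen coefficients $\A(x_0)$, zero obstacle and constant positive source $h(x_0)$. A linear change of variable $y\mapsto\A(x_0)^{-1/2}y$ reduces $w_\infty$ to a solution of the classical constant-coefficient obstacle problem, which Caffarelli has classified: either a half-space solution $\tfrac12((y\cdot e)_+)^2$ or a non-negative second-order polynomial $\tfrac12\, y^\top M\, y$ with $\textup{tr}(M)=1$ and $M\geq 0$.

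To upgrade subsequential convergence to unique blow-ups one introduces quasi-monotone quantities of Weiss and Monneau type: the Weiss functional $W(r,w,x_0):=r^{-n-2}\!\int_{B_r(x_0)}(\A\nabla w\cdot\nabla w+2hw)\,dx - 2r^{-n-3}\!\int_{\partial B_r(x_0)}w^2\,d\HH$ is monotone in $r$ up to an error $O(r^\alpha)$ coming from the Lipschitz/H\"older behaviour of $\A$ and $h$, and an analogous Monneau functional $r^{-n-3}\!\int_{\partial B_r(x_0)}(w-p_{x_0})^2\,d\HH$ is almost monotone at singular points, where $p_{x_0}$ denotes a polynomial blow-up. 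Both are proved by differentiating in $r$, invoking the Euler-Lagrange equation and treating the commutators involving $\nabla\A$ as lower-order remainders. This pins down the Weiss density at every free boundary point to one of two explicit values, partitioning $\partial\{u=\psi\}$ into $\Reg(u)\cup\Sing(u)$ with $\Reg(u)$ relatively open. At regular points, combining \emph{Weiss's epiperimetric inequality} with the quasi-monotonicity of $W$ yields a power-rate decay $\|w_{x_0,r}-w_\infty\|_{L^2(\partial B_1)}\lesssim r^\beta$, hence uniqueness and continuous dependence of the blow-up direction $e(x_0)$, and the local $C^{1,\beta}$ hypersurface structure of $\Reg(u)$. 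At singular points, Monneau's formula gives uniqueness and continuous dependence of $p_{x_0}$; a Whitney extension plus implicit function theorem argument then stratifies $\Sing(u)$ according to $\dim\ker M_{x_0}$ into the countably $C^1$-rectifiable pieces $S_0,\ldots,S_{n-1}$.

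The central technical obstacle is precisely the non-constant, merely Lipschitz coefficient matrix $\A$: Caffarelli's classical freezing argument breaks down, and no global reduction to a constant-coefficient PDE is available. The whole scheme hinges on showing that the perturbative terms generated by $\nabla\A$ and by the H\"older variation of $h$ are genuinely subleading both in the quasi-monotonicity formulas and in the epiperimetric correction. This careful bookkeeping of error terms, rather than any single conceptual innovation, is the substantive content that has to be verified to make the Weiss/Monneau/Caffarelli machinery survive the move from smooth to Lipschitz coefficients.
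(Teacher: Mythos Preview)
The paper does not actually prove this theorem; it is quoted verbatim from \cite{FocGelSp15} and used as a black box to derive the nonlinear results of Section~3. Your outline nonetheless matches precisely the strategy the authors summarize in the Introduction for the proof in \cite{FocGelSp15}: reduction to zero obstacle, quasi-monotonicity formulas of Weiss and Monneau type adapted to Lipschitz coefficients, Weiss's epiperimetric inequality, and Caffarelli's blow-up classification, with the key technical point being control of the error terms generated by $\nabla\A$ and the H\"older variation of the source.
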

\begin{remark}
 Following the generalization of the previous result by the second named author in \cite{Geraci16}, 
 we can actually require $f$ above to satisfy only a suitable Dini-type continuity condition to 
 conclude an analogous free boundary analysis.
\end{remark}

\section{Coercive vector fields}\label{s:cvf}

Let $\Om\subset\Rn$ be a smooth, bounded and open set. 
Consider $(a_0,\aaa):\Omega\times\R\times\R^n\to\R\times\R^n$ a smooth vector field satisfying
(cf. \cite[Section~4.3.2]{Troianiello})
\begin{itemize}
\item[(H1)] $a_0$ is Carath\'eodory, 
$\aaa\in C^{1,1}_{loc}(\Omega\times\R\times\R^n,\R^{n})$ and there is $p\in(1,\infty)$, for which 
\begin{itemize}
\item[(i)] $\big(\aaa(x,z,\xi)\cdot\xi\big)\wedge\big(a_0(x,z,\xi)z\big)
\geq \lambda|\xi|^p+\lambda_1|z|^p-\phi_1(x)$ for $\Ln$ a.e. 
$x\in\Omega$, and for all $z\in\R$, $\xi\in\Rn$, with  $\phi_1\in L^1(\Omega)$, $\lambda>0$ and $\lambda_1\geq0$;
\item[(ii)] $|a_0(x,z,\xi)|\vee|\aaa(x,z,\xi)|\leq \Lambda(|z|^{p-1}+|\xi|^{p-1})+\phi_2(x)$  for $\Ln$ a.e. 
$x\in\Omega$ and for all $(z,\xi)\in\R\times\R^n$, with $\Lambda>0$ and $\phi_2\in L^{\frac{p}{p-1}}(\Omega)$;
\item[(iii)] there is a constant $\Theta>0$  such that for all $x\in\Omega$, $z,\,\zeta\in\R$, and 
$\xi\in\R^n$
\[
 |\aaa(x,z,\xi)-\aaa(x,\zeta,\xi)|\leq \Theta|z-\zeta|(1+|\xi|^{p-1});
\]
\end{itemize}
\item[(H2)]  
for $\Ln$ a.e. $x\in\Omega$, and for all $z\in\R$, $\xi,\,\eta\in \R^n$
\begin{equation}\label{e:amonotone}
0\leq \big(\aaa(x,z,\xi)-\aaa(x,z,\eta)\big)\cdot(\xi-\eta),
\end{equation}
with strict inequality sign for $\xi\neq\eta$.
\end{itemize}
Note that strongly coercive vector fields as defined in \cite[Definition~3.1 of Chapter~IV]{KS} 
satisfy the assumptions above.

Under conditions (H1)-(H2) and supposing the obstacle $\psi$ and the boundary datum $g$ in $W^{1,p}(\Omega)$
and satisfying the compatibility condition $g\geq\psi$ $\cL^n$ a.e. on $\Omega$, the existence of solutions to 
\eqref{e:mainpb} is a consequence of classical results.
Indeed, consider the nonlinear operator $\mathscr{A}:W^{1,p}(\Omega)\mapsto W^{1,-p^\prime}(\Omega)$ defined by
\begin{equation}\label{e:Aoperator}
\langle \mathscr{A}(w),v\rangle:=\int_\Omega\big(\widetilde{\aaa}(x,w,\nabla w)\cdot \nabla v
+ \widetilde{a}_0(x,w,\nabla w)\,v\big)\,dx
\end{equation}
for $w\in W^{1,p}(\Omega)$ and $v\in W^{1,p}_0(\Omega)$, where for all $(x,z,\xi)\in\Omega\times\R\times\R^n$
\[
 \widetilde{\aaa}(x,z,\xi):=\aaa(x,z+g(x),\xi+\nabla g(x)),
\qquad \widetilde{a}_0(x,z,\xi):={a_0}(x,z+g(x),\xi+\nabla g(x)).
\]
Note that $\widetilde{\aaa}$ and $\widetilde{a}_0$ are Carath\'eodory functions on account of the regularity
of $\aaa$ and $a_0$.
Then, items (i) and (ii) in (H1) yield that $\mathscr{A}$ is coercive relative to the closed (in the norm 
topology of $W^{1,p}$) convex subset $\mathbb{K}_{\psi-g,0}$ of 
$W^{1,p}_0(\Omega)$ given by  
\[
{\mathbb{K}}_{\psi-g,0}:=
\{v\in  W^{1,p}_0(\Omega):\,v\geq\psi-g\quad\text{$\Ln$ a.e. on $\Omega$}\}.
\]
More precisely, for some $w_0\in {\mathbb{K}}_{\psi-g,0}$ (actually for any $w_0$ in this case)
\[
 \lim_{w\in W^{1,p}_0(\Omega),\,\|w\|_{W^{1,p}(\Omega)}\to\infty}\|w\|^{-1}_{W^{1,p}(\Omega)}
 \langle\mathscr{A}(w),w-w_0\rangle=+\infty.
 \]
\begin{remark}
Coercivity is clearly ensured under weaker conditions than those in item (i) of (H1) in view of Sobolev embedding theorems (cf. \cite[Theorems 3.7 and 3.8]{Gi03})
\end{remark}

In particular, \cite[condition (4.26)]{Troianiello} is fulfilled for any $w_0\in {\mathbb{K}}_{\psi-g,0}$ and for any $R>0$. 
Since the injection $W^{1,p}(\Omega)\hookrightarrow L^p(\Omega)$ is compact, assumption (H2) gives that $\mathscr{A}$ 
is a Leray-Lions operator (cf. \cite[Theorem~4.21]{Troianiello}). Existence of a solution 
$\widetilde{u}\in {\mathbb{K}}_{\psi-g,0}$ for 
\[
\int_\Omega\widetilde{\aaa}(x,\widetilde{u},\nabla \widetilde{u})\cdot\nabla(v-\widetilde{u})\,dx
+\int_\Omega \widetilde{a}_0(x,\widetilde{u},\nabla \widetilde{u})(v-\widetilde{u})dx\geq 0
\qquad\text{for all $v\in {\mathbb{K}}_{\psi-g,0}$}
\]
follows at once from \cite[Lemma~4.13 and Theorem~4.17]{Troianiello}.
Therefore, $u:=\widetilde{u}+g$ is a solution to \eqref{e:mainpb}.

Finally, uniqueness of solutions to \eqref{e:mainpb} is guaranteed in case the ensuing more stringent 
monotonicity condition is satisfied 
\begin{equation}\label{e:Tmonotone}
 0\leq \big(\aaa(x,z,\xi)-\aaa(x,\zeta,\eta)\big)\cdot(\xi-\eta)+
 \big(a_0(x,z,\xi)-a_0(x,\zeta,\eta)\big)(z-\zeta),
 \end{equation}
for $\Ln$ a.e. $x\in\Omega$, for all $z,\,\zeta\in\R$ and $\xi,\,\eta\in\R^n$, with strict inequality sign 
in \eqref{e:Tmonotone} if $\xi\neq\eta$.
Disregarding the characterization of the equality case in \eqref{e:Tmonotone}, the latter condition 
yields that the nonlinear operator $\mathscr{A}$ defined in \eqref{e:Aoperator} is monotone, actually 
$T$-monotone (cf. \cite[p.~231]{Troianiello}).

 In the variational case in which $\aaa=\nabla_\xi F$ and $a_0=\partial_z F$, (H2) follows from the convexity 
 of the Lagrangian $F$ in the gradient variable $\xi$, while \eqref{e:Tmonotone} from the joint convexity of
 $F$ in $(z,\xi)$.

\subsection{Regularity of solutions}

In what follows we consider variational inequalities as in \eqref{e:mainpb} for vector fields $(a_0,\aaa)$ 
satisfying (H1)-(H2) and further assuming the following conditions on the {obstacle function}: 
\begin{itemize}
\item[(H3)] $\psi\in C^{1,1}_{loc}(\Omega)$. 
\end{itemize}
{Note then that 
\begin{equation}\label{e:h}
h:=-\div\big(\aaa(x,\psi,\nabla\psi)\big)+a_0(x,\psi,\nabla\psi)\in L^\infty_{loc}(\Omega).
\end{equation}
}
The key to establish optimal regularity is contained in Proposition~\ref{p:tecnico1} in which we 
switch from a variational inequality to a nonlinear elliptic PDE in divergence form. Indeed, 
on account of Proposition~\ref{p:tecnico1}, in Theorem~\ref{t:noptreg} we establish almost optimal 
regularity of solutions through classical elliptic regularity results and finally optimal regularity 
is achieved in Theorem~\ref{t:optreg} by means of Gerhardt's approach (cf. \cite{Ger73}). 

Despite almost optimal regularity of solutions is a well-studied subject, we provide in 
Proposition~\ref{p:tecnico1} and Theorem~\ref{t:noptreg} below a different proof 
that departs from the classical ones known in literature (\cite{LewyStampacchi69,BrezisStampacchia68,Brezis72,
MoscoTroianiello73,HanouzetJoly79,Giaq81,FrehseMosco82,Gustaffson86,Troianiello})
{by extending the linearization method to the general setting studied here (cf. \cite{Fu90,FuMin00}).}
The idea is to reduce regularity for variational inequalities of the sort in \eqref{e:mainpb} to the more standard setting of nonlinear elliptic PDEs. 
In the case of quadratic forms a similar argument has been established in \cite{FocGelSp15} for the obstacle problem in Theorem~\ref{t:mainFGS}, inspired by the case discussed in \cite{Weiss} for the Laplacian. 
\begin{proposition}\label{p:tecnico1}
Let (H1)-{(H3)} hold true. Then, a solution $u\in {\mathbb{K}_{\psi,g}}$ to problem \eqref{e:mainpb} 
solves  { 
\begin{equation}\label{e:pde}
-\div(\aaa(x,u,\nabla u))+a_0(x,u,\nabla u)=\zeta(x)
\end{equation}
$\Ln$ a.e. in $\Omega$ and in $\mathcal{D}^\prime(\Omega)$, for some 
function $\zeta\in L^\infty_{loc}(\Omega)$ such that, for $h$ defined in \eqref{e:h},
\[
0\leq \zeta\leq 
h^+\,\chi_{\{u=\psi\}}\quad\textrm{$\Ln$ a.e. in $\Omega$}.
\]}
\end{proposition}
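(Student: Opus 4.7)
My plan is to adapt the linearization strategy of \cite{Fu90,FuMin00} to the present nonlinear setting. As a first step, for every $\varphi\in C^\infty_c(\Omega)$ with $\varphi\geq 0$, the function $v=u+\varphi$ is admissible in \eqref{e:mainpb}, yielding
\[
\int_\Omega\aaa(x,u,\nabla u)\cdot\nabla\varphi\,dx+\int_\Omega a_0(x,u,\nabla u)\varphi\,dx\geq 0.
\]
Hence the distribution $Lu:=-\div\big(\aaa(x,u,\nabla u)\big)+a_0(x,u,\nabla u)$ is nonnegative, and is represented by a nonnegative Radon measure $\mu$ on $\Omega$.

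The heart of the proof is a sharp upper bound on $\mu$. Fix $\eta\in C^\infty_c(\Omega)$ with $\eta\geq 0$, and $\eps>0$ small. The competitor $v^\eps:=\max(u-\eps\eta,\psi)$ belongs to $\mathbb{K}_{\psi,g}$ (since $\eta$ has compact support, $v^\eps=u=g$ on $\partial\Omega$) and satisfies $v^\eps-u=-\eps\eta+(\psi-u+\eps\eta)^+$, with $(\psi-u+\eps\eta)^+\in W^{1,p}_0(\Omega)$. Substituting into \eqref{e:mainpb} gives
\[
\eps\,\langle\mu,\eta\rangle\leq\int_\Omega\aaa(x,u,\nabla u)\cdot\nabla(\psi-u+\eps\eta)^+\,dx+\int_\Omega a_0(x,u,\nabla u)(\psi-u+\eps\eta)^+\,dx.
\]
I then linearize by testing the identity $-\div\big(\aaa(x,\psi,\nabla\psi)\big)+a_0(x,\psi,\nabla\psi)=h$, valid in $\mathcal{D}'(\Omega)$ and $\Ln$-a.e.~by \eqref{e:h} and (H3), against the same $(\psi-u+\eps\eta)^+$, and subtract. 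The leading difference is controlled by monotonicity (H2) as $[\aaa(x,u,\nabla u)-\aaa(x,u,\nabla\psi)]\cdot(\nabla\psi-\nabla u)\leq 0$, and the $\aaa$-error by (H1)(iii), which bounds $|\aaa(x,u,\nabla\psi)-\aaa(x,\psi,\nabla\psi)|\leq\Theta|u-\psi|(1+|\nabla\psi|^{p-1})$. Since $|u-\psi|\leq\eps\eta$ on $E_\eps:=\{u\leq\psi+\eps\eta\}$, $\nabla u=\nabla\psi$ $\Ln$-a.e.~on $\{u=\psi\}$, and $|E_\eps\setminus\{u=\psi\}|\to 0$ as $\eps\to 0^+$, dominated convergence then yields
\[
\eps\,\langle\mu,\eta\rangle\leq\eps\int_{\{u=\psi\}}h\,\eta\,dx+o(\eps).
\]
Dividing by $\eps$ and letting $\eps\to 0^+$, $\langle\mu,\eta\rangle\leq\int_\Omega h^+\chi_{\{u=\psi\}}\,\eta\,dx$ for every admissible $\eta$.

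Combining the two steps, $\mu$ is nonnegative and dominated by $h^+\chi_{\{u=\psi\}}\Ln$, hence absolutely continuous with density $\zeta\in L^\infty_{loc}(\Omega)$ satisfying $0\leq\zeta\leq h^+\chi_{\{u=\psi\}}$ a.e.; equation \eqref{e:pde} then holds in $\mathcal{D}'(\Omega)$ and, as both sides admit $L^1_{loc}$ representatives, also $\Ln$-a.e. I expect the delicate point to be the $\eps\to 0^+$ limit of the linearization: a priori the integrals involve $\nabla u-\nabla\psi$, which need not be small on $E_\eps$; the leading contribution is absorbed by (H2), while the residual terms are $o(\eps)$ thanks to the Sobolev identity on $\{u=\psi\}$, to (H1)(iii), and to the vanishing of $|E_\eps\setminus\{u=\psi\}|$.
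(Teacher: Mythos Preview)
Your proposal is correct and follows essentially the same approach as the paper's proof. Your competitor $v^\eps=\max(u-\eps\eta,\psi)$ with $\eta\geq 0$ is exactly the paper's $v_\eps=(u+\eps\varphi)\vee\psi$ with $\varphi=-\eta$, and your decomposition into the $h$-term, the monotonicity term handled by (H2), the $z$-error handled by (H1)(iii), and the $a_0$-remainder matches the paper's splitting into $I^{(1)}_\eps$, $I^{(2)}_\eps$, $I^{(3)}_\eps$; the limiting argument via $|E_\eps\setminus\{u=\psi\}|\to 0$ and $\nabla u=\nabla\psi$ a.e.\ on $\{u=\psi\}$ is identical.
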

\begin{proof} 
 Let $\varphi\in C^\infty_c(\Omega)$ and for all $\eps>0$ take 
 $v_\eps:=(u+\eps\varphi)\vee\psi\in {\mathbb{K}_{\psi,g}}$ as test function in \eqref{e:mainpb}.
 Note that in case $\varphi$ is a non-negative function we obtain
\begin{equation}\label{e:mainpb4}
 \int_\Omega\aaa(x,u,\nabla u)\cdot\nabla \varphi\,dx+\int_\Omega a_0(x,u,\nabla u)\varphi\, dx\geq 0.
 \end{equation}
 Therefore, the distributional divergence $\div(\aaa(\cdot,u,\nabla u))$ of $\aaa(\cdot,u,\nabla u)$ 
 satisfies
\begin{equation*}
\langle -\div(\aaa(\cdot,u,\nabla u))+a_0(\cdot,u,\nabla u)\Ln\res\Omega,\varphi\rangle\geq 0\qquad
   \text{for all $\varphi\in C^\infty_c(\Omega)$, $\varphi\geq 0$},
\end{equation*}
in turn implying that $\mu:=-\div(\aaa(\cdot,u,\nabla u))+a_0(\cdot,u,\nabla u)\Ln\res\Omega$ is a 
non-negative Radon measure. 

Next, consider $v_\eps$ as above with no sign assumptions on $\varphi$, set $\Omega_\eps:=\{u+\eps\varphi<\psi\}$,
and rewrite the two addends in \eqref{e:mainpb} respectively as follows
\begin{equation*}
 \int_\Omega\aaa(x,u,\nabla u)\cdot\nabla(v_\eps-u)dx=\eps \int_\Omega\aaa(x,u,\nabla u)\cdot\nabla\varphi\,dx
 +\int_{\Omega_\eps}\aaa(x,u,\nabla u)\cdot\nabla\big(\psi-(u+\eps\varphi)\big)dx,
\end{equation*}
and
\begin{equation*}
 \int_\Omega a_0(x,u,\nabla u)(v_\eps-u)dx=
 \eps\int_\Omega a_0(x,u,\nabla u)\varphi\,dx
 +\int_{\Omega_\eps}a_0(x,u,\nabla u)\big(\psi-(u+\eps\varphi)\big)dx.
\end{equation*}
Thus, on account of the definition of the measure $\mu$ we conclude that
\begin{equation*}
\eps\int_\Omega\varphi\,d\mu\geq -\int_{\Omega_\eps}\aaa(x,u,\nabla u)\cdot\nabla\big(\psi-(u+\eps\varphi)\big)dx
-\int_{\Omega_\eps}a_0(x,u,\nabla u)\,\big(\psi-(u+\eps\varphi)\big)dx.
\end{equation*}
By the monotonicity hypothesis on the field $\aaa$ in (H2) we have that
\begin{multline*}
\eps\int_\Omega\varphi\,d\mu
\geq-\int_{\Omega_\eps}\aaa(x,u,\nabla \psi)\cdot\nabla\big(\psi-u\big)dx\\
+\eps\int_{\Omega_\eps}\aaa(x,u,\nabla u)\cdot\nabla\varphi\,dx
-\int_{\Omega_\eps}a_0(x,u,\nabla u)\big(\psi-(u+\eps\varphi)\big)dx
\end{multline*}
and therefore we infer that 
\begin{multline}\label{e:variations}
\eps\int_\Omega\varphi\,d\mu\geq 
\underbrace{-\int_{\Omega_\eps}\Big(\aaa(x,\psi,\nabla \psi)\cdot\nabla\big(\psi-(u+\eps\varphi)\big)
+a_0(x,\psi,\nabla \psi)\big(\psi-(u+\eps\varphi)\big)\Big)dx}_{=:I^{(1)}_\eps}\\
+\underbrace{\eps\int_{\Omega_\eps}\big(\aaa(x,u,\nabla u)-\aaa(x,\psi,\nabla \psi)\big)\cdot\nabla\varphi\,dx
+\eps\int_{\Omega_\eps}\big(a_0(x,u,\nabla u)-a_0(x,\psi,\nabla \psi)\big)\varphi\,dx}_{=:I^{(2)}_\eps}\\
+\underbrace{\int_{\Omega_\eps}\big(\aaa(x,\psi,\nabla \psi)-\aaa(x,u,\nabla \psi))\cdot\nabla\big(\psi-u\big)dx
+\int_{\Omega_\eps}\big(a_0(x,\psi,\nabla \psi)-a_0(x,u,\nabla u)\big)\big(\psi-u)\big)dx}_{=:I^{(3)}_\eps}.
\end{multline}
We deal with the three terms above separately.  We start off with the first term that rewrites as
\[
  I^{(1)}_\eps=-\int_{\Omega}\Big(\aaa(x,\psi,\nabla \psi)\cdot\nabla\big((\psi-(u+\eps\varphi))\vee 0\big)
+a_0(x,\psi,\nabla \psi)\big((\psi-(u+\eps\varphi))\vee 0\big)\Big)dx.
\]
Being $u\ge\psi$ $\Ln$ a.e. in $\Omega$ and $\varphi\in C^\infty_c(\Omega)$, we have
$\Omega_\eps\subset\hskip-0.125cm\subset\Omega$, so that $(\psi-(u+\eps\varphi))\vee 0\in W^{1,p}_0(\Omega)$. 
By taking this into account, together with the condition $\psi\in C^{1,1}_{loc}(\Omega)$ (cf. (H3)), 
item (ii) in (H1) and an integration by parts yield, {recalling that $h=-\div\big(\aaa(x,\psi,\nabla\psi)\big)+a_0(x,\psi,\nabla\psi)$,}
\begin{multline}\label{e:I1}
  I^{(1)}_\eps=\int_{\Omega}\big(\div(\aaa(x,\psi,\nabla \psi))-a_0(x,\psi,\nabla \psi)\big)
  \big((\psi-(u+\eps\varphi))\vee 0\big)dx\\
  ={-\int_{\Omega_\eps}h\,\big((\psi-(u+\eps\varphi)\big)\,dx\geq
  -\int_{\Omega_\eps}h^+\,\big(\psi-(u+\eps\varphi)\big)\,dx\geq\eps\int_{\Omega_\eps}h^+\,\varphi\, dx}
\end{multline}
{where in the last but one equality we have used that $\psi-(u+\eps\varphi)\geq 0$ $\Ln$ a.e. on 
$\Omega_\eps$ and in the last one that $u\geq\psi$ $\Ln$ a.e. on $\Omega$. In turn, the latter condition 
implies that 
\[
\Ln\big(\big(\{u=\psi\}\cap\{\varphi<0\}\big)\setminus\Omega_\eps\big)=
\Ln\big(\Omega_\eps\setminus\big(\{0\leq u-\psi\leq\eps\|\varphi\|_{L^\infty(\Omega)}\}\cap \{\varphi<0\}\big)\big)=0,
\]
so that $\chi_{\Omega_\eps}\to\chi_{\{u=\psi\}\cap\{\varphi<0\}}$ in $L^1(\Omega)$, for every $\varphi\in C^\infty_c(\Omega)$. Therefore, from \eqref{e:I1} we infer
\begin{equation}\label{e:I1b}
\liminf_{\eps\to 0^+}\eps^{-1}I^{(1)}_\eps\geq\int_{\{u=\psi\}\cap\{\varphi<0\}} h^+\,\varphi\,dx.
\end{equation}
}
In addition, by the Dominated convergence theorem and by the locality of the weak gradient, we conclude that 
for every $\varphi\in C^\infty_c(\Omega)$ 
\begin{align}\label{e:I2}
\lim_{\eps\to 0^+}\eps^{-1}I^{(2)}_\eps&=\int_{\{u=\psi\}\cap\{\varphi<0\}}
 \big(\aaa(x,u,\nabla u)-\aaa(x,\psi,\nabla \psi)\big)\cdot\nabla\varphi\,dx\notag\\
&\quad+
\int_{\{u=\psi\}\cap\{\varphi<0\}} \big(a_0(x,u,\nabla u)-a_0(x,\psi,\nabla \psi)\big)\varphi\,dx=0.
\end{align}
Finally, to deal with $I^{(3)}_\eps$ {we use item (iii) in (H1) to deduce that}
\[
 I^{(3)}_\eps\geq-\eps\Theta\|\varphi\|_{L^\infty(\Omega)}\int_{\Omega_\eps}(1+|\nabla\psi|^{p-1})|\nabla(\psi-u)|\,dx
 -\eps\|\varphi\|_{L^\infty(\Omega)}\int_{\Omega_\eps}|a_0(x,u,\nabla u)-a_0(x,\psi,\nabla \psi)|\,dx.
\]
Therefore, {by the quoted convergence of $\chi_{\Omega_\eps}$} and by the locality of the weak gradient, as 
{in \eqref{e:I1b}} and \eqref{e:I2}, we conclude that 
\begin{equation}\label{e:I3}
{\liminf_{\eps\to 0^+}\eps^{-1}I^{(3)}_\eps\geq 0.}
\end{equation}
Resuming, by \eqref{e:I1b}, \eqref{e:I2} and \eqref{e:I3}, passing to the limit as $\eps\downarrow 0^+$ in 
\eqref{e:variations} divided by $\eps>0$, we infer that
\[
 \int_\Omega\varphi\,d\mu\geq {\int_{\{u=\psi\}\cap\{\varphi<0\}} h^+\,\varphi\,dx.}
\]
By approximation (and by applying the argument above to $-\varphi$) we infer that for every 
$\varphi\in C^0_c(\Omega)$
\begin{equation*}
\int_{\{u=\psi\}\cap\{\varphi<0\}}{h^+}\,\varphi\,dx\leq
\int_\Omega\varphi\,d\mu\leq\int_{\{u=\psi\}\cap\{\varphi>0\}}{h^+}\,\varphi\,dx.
\end{equation*}
In turn, the latter inequalities imply that $\mu<\hskip-0.125cm<\Ln\res\Omega$. 
{Thus, if $\mu=\zeta \Ln\res\Omega$, with $\zeta\in L^1(\Omega)$, we infer that 
$0\leq\zeta\leq h^+\chi_{\{u=\psi\}}$ $\Ln$ a.e. $\Omega$, so that 
$\zeta\in L^\infty_{loc}(\Omega)$ by \eqref{e:h}.}


In conclusion, as by definition $\mu=-\div(\aaa(\cdot,u,\nabla u))+a_0(\cdot,u,\nabla u)\Ln\res\Omega$,  equation \eqref{e:pde} follows at once. 
\end{proof}

\begin{remark}
{One can prove that a solution $u$ of \eqref{e:mainpb} is a $Q$-minimum of a lower order perturbation of the $p$-Dirichlet energy from the conclusions of 
Proposition~\ref{p:tecnico1} as argued in \cite{GiaqGiu84} (cf. also \cite[Chapter~6]{Gi03}).} 
More precisely, let $\mathscr{G}:\mathcal{B}(\Omega)\times W^{1,p}(\Omega)\to[0,\infty)$ 
be 
\[
 \mathscr{G}(w,A):=\int_A G\big(x,w(x),\nabla w(x)\big)\,dx,
 \]
where $A\in\mathcal{B}(\Omega)$, the class of Borel subsets of $\Omega$, and 
$G:\Omega\times\R\times\R^n\to[0,\infty)$ is the Carath\'eodory integrand defined by 
 \[
  G(x,z,\xi):=|\xi|^p+|z|^p+|\nabla\psi(x)|^p+|\phi_2(x)|^{\frac p{p-1}}
  +|\phi_1(x)|+|a_0(x,u(x),\nabla u(x))|^{\frac{p}{p-1}}.
 \]
 Then, there is a constant $Q=Q(p,\lambda,\Lambda)>1$ such that  
 \begin{equation}\label{e:Qmin}
  \mathscr{G}(u,K)\leq Q\,\mathscr{G}(w,K)
 \end{equation}
 for all $w\in g+W^{1,p}_0(\Omega)$ such that $K:=\mathrm{spt}(w-u)\subset\hskip-0.125cm\subset\Omega$. 
 Note that $|a_0(\cdot,u(\cdot),\nabla u(\cdot))|^{\frac{p}{p-1}}\in L^{1}(\Omega)$ by item {(ii) in (H1)}.
 The direct methods for regularity introduced by Giaquinta and Giusti \cite{GiaqGiu82, GiaqGiu84} 
 imply that $u\in C^{0,\alpha}_{loc}(\Omega)$ for some $\alpha\in(0,1]$
{under suitable assumptions on $\phi_1$, $\phi_2$, $a_0$ and $p$ (cf. \cite{FuMin00} for instance).} 

Actually, we can establish \eqref{e:Qmin} a priori, directly from \eqref{e:mainpb} 
by taking the family of test functions $v=w\vee\psi$ with $w$ as above by means 
of items (i) and (ii) in (H1). 
  
 Finally, we recall that under the standing assumptions on $(\aaa,a_0)$ upper semicontinuity and approximate continuity of $\psi$ suffices to establish continuity of solutions (cf. \cite{MichaelZiemer86}).  
In particular, this shows that the sets $\{u>\psi\}$ and $\Omega_\eps$, $\eps>0$ suitable, in the proof of Proposition~\ref{p:tecnico1} are actually open.   
\end{remark}

We are now ready to deduce almost optimal regularity for solutions to \eqref{e:mainpb} from standard
elliptic regularity provided item (iii) in (H1) and (H2) are substituted by the more restrictive 
\begin{itemize}
\item[(iii)$^\prime$] there is a constant $\Theta>0$ such that for all $x,\,y\in\Omega$, $z,\,\zeta\in\R$
and $\xi\in\R^n$
\[
 |\aaa(x,z,\xi)-\aaa(y,\zeta,\xi)|\leq \Theta(|x-y|+|z-\zeta|)(1+|\xi|^{p-1})
\]
\item[(H2)$^{\prime}$] there is $\nu>0$ such that for $\Ln$ a.e. $x\in\Omega$, and for all 
 $z\in\R$, $\xi,\,\eta\in \R^n$
\begin{equation}\label{e:acoercive}
\nu^{-1}(1+|\xi|+|\eta|)^{p-2}\,|\xi-\eta|^2\leq 
\big(\aaa(x,z,\xi)-\aaa(x,z,\eta)\big)\cdot(\xi-\eta)\leq
\nu(1+|\xi|+|\eta|)^{p-2}\,|\xi-\eta|^2;
\end{equation}
\end{itemize}
On account of \eqref{e:pde} in Proposition~\ref{p:tecnico1} suboptimal regularity follows.
\begin{theorem}[Almost optimal regularity]\label{t:noptreg}
Let (H1) (with (iii)$^\prime$ in place of (iii)), (H2)$^{\prime}$ and (H3) 
hold true. Let $u\in  {\mathbb{K}_{\psi,g}}$ be a solution to problem \eqref{e:mainpb}, then 
$u\in W^{2,q}_{loc}\cap C^{1,\alpha}_{loc}(\Omega)$ for all $q\in[1,\infty)$ and all $\alpha\in(0,1)$.
\end{theorem}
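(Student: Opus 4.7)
The natural plan is to invoke Proposition~\ref{p:tecnico1} to convert the variational inequality \eqref{e:mainpb} into the quasilinear PDE \eqref{e:pde}, and then to apply the classical interior regularity theory for equations of $p$-Laplacian type. Concretely, Proposition~\ref{p:tecnico1} yields that $u$ solves
\begin{equation*}
-\div(\aaa(x,u,\nabla u)) = F(x):=\zeta(x)-a_0(x,u,\nabla u) \qquad \Ln \text{ a.e. in } \Omega,
\end{equation*}
with $\zeta\in L^\infty_{loc}(\Omega)$.

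First I would establish that $u\in L^\infty_{loc}(\Omega)$; this follows either from a standard De Giorgi/Moser iteration applied directly to \eqref{e:mainpb} using the coercivity and growth conditions (H1)(i)--(ii), or equivalently from the $Q$-minimality observation recorded in the preceding remark combined with the Giaquinta-Giusti theory. Together with the growth (H1)(ii), this gives $|a_0(x,u,\nabla u)|\leq C(1+|\nabla u|^{p-1})$ locally, so that \eqref{e:pde} becomes a quasilinear equation with $L^\infty_{loc}$ forcing term and a lower-order contribution of natural $(p-1)$-growth in $|\nabla u|$.

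The key step is then to invoke the interior $C^{1,\alpha_0}$ regularity theorems of DiBenedetto, Tolksdorf and Lieberman for solutions of quasilinear equations of $p$-Laplacian type with natural growth in the gradient. The structural hypotheses (H2)' and (iii)' are precisely what such results require: (H2)' provides the $p$-type ellipticity and growth of $\aaa$ with respect to $\xi$, while (iii)' supplies the Lipschitz dependence in $(x,z)$. This produces $u\in C^{1,\alpha_0}_{loc}(\Omega)$ for some $\alpha_0\in(0,1)$, and I expect this to be the main and essentially only nontrivial step in the argument, since it is the only place where the $p$-degenerate nonlinear regularity machinery is genuinely needed.

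Finally, once $\nabla u\in L^\infty_{loc}(\Omega)\cap C^{0,\alpha_0}_{loc}(\Omega)$ is available, the frozen coefficient matrix $A_{ij}(x):=D_{\xi_j}\aaa_i(x,u(x),\nabla u(x))$ lies in $C^{0,\alpha_0}_{loc}$ and is uniformly elliptic on each compactly contained subdomain (the lower ellipticity constant stays away from zero thanks to the ``$1+$'' appearing in (H2)' together with the local $L^\infty$ bound on $\nabla u$). Rewriting \eqref{e:pde} in non-divergence form for $u$ with such Hölder continuous coefficients and $L^\infty_{loc}$ right-hand side, a standard difference quotient argument combined with the Calderón-Zygmund theory for linear elliptic equations gives $u\in W^{2,q}_{loc}(\Omega)$ for every $q\in[1,\infty)$. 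Morrey's embedding then upgrades this to $u\in C^{1,\alpha}_{loc}(\Omega)$ for every $\alpha\in(0,1)$, which is the desired conclusion.
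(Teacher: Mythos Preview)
Your proposal is correct and follows essentially the same route as the paper: reduce to the PDE \eqref{e:pde} via Proposition~\ref{p:tecnico1}, invoke the $C^{1,\alpha_0}$ theory for $p$-Laplacian type equations (the paper cites Manfredi rather than DiBenedetto--Tolksdorf--Lieberman, but this is the same circle of results), and then bootstrap to $W^{2,q}_{loc}$ for all $q$ once the coefficients become locally uniformly elliptic and H\"older.

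The only notable difference is in the last step. The paper first obtains $u\in W^{2,2}_{loc}$ from \cite{LadUra}, then differentiates \eqref{e:pde} so that each $\partial_k u$ solves a linear \emph{divergence-form} equation with H\"older coefficients and right-hand side the divergence of an $L^\infty_{loc}$ field, and applies the $L^q$ estimates of \cite[Theorem~10.15]{Gi03}. You instead propose to pass to non-divergence form and use Calder\'on--Zygmund directly. Both routes are standard and equivalent in this setting; just be aware that your non-divergence rewriting presupposes the existence of second derivatives, so the difference-quotient step (i.e.\ the preliminary $W^{2,2}_{loc}$ regularity) must come first, exactly as in the paper.
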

\begin{proof}
By taking into account that $u$ solves \eqref{e:pde} (cf. Proposition~\ref{p:tecnico1}), classical elliptic 
regularity for nonlinear elliptic equations in divergence form yield that $u\in C^{1,\alpha}_{loc}(\Omega)$
for some $\alpha\in(0,1)$ (cf. \cite[Section~3]{Manfredi88}, \cite[Chapter~5]{ManfrediPhD}).

It is also classical to prove that $u\in W^{2,2}_{loc}(\Omega)$
(cf.~\cite[Chapter 4, Theorem 5.2]{LadUra}) and by differentiation, on account of the 
$C^{1,\alpha}_{loc}$ regularity already established and (H1)-(H2)$^\prime$, that the weak derivatives of 
$u$ satisfy a linear uniformly elliptic PDE with H\"older coefficients and right hand side being the divergence 
of a field in $L^\infty_{loc}(\Omega,\R^n)$. Therefore, we may apply standard $L^q$-regularity estimates 
(cf. \cite[Theorem~10.15]{Gi03}) to conclude that 
{$u\in W^{2,q}_{loc}\cap C^{1,\alpha}_{loc}(\Omega)$} for all $q\in[1,\infty)$ and all $\alpha\in(0,1)$.
\end{proof}
{
\begin{corollary}\label{c:density}
Under the assumptions of Theorem~\ref{t:noptreg} the function $\zeta$ in \eqref{e:pde}
of Proposition~\ref{p:tecnico1} actually equals $h^+\chi_{\{u=\psi\}}$ $\cL^n$ a.e. on
$\Omega$.
\end{corollary}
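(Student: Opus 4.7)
The plan is to upgrade the one-sided inequality $0\leq \zeta\leq h^+\chi_{\{u=\psi\}}$ from Proposition~\ref{p:tecnico1} to an equality by showing, via the PDE \eqref{e:pde}, that $\zeta=h$ $\Ln$-a.e. on the contact set $\{u=\psi\}$. Since $\zeta=0$ $\Ln$-a.e. on $\{u>\psi\}$ is already part of Proposition~\ref{p:tecnico1}, the only work is on the contact set, where the argument becomes available precisely because Theorem~\ref{t:noptreg} upgrades the solution from $W^{1,p}_{loc}$ to $W^{2,q}_{loc}\cap C^{1,\alpha}_{loc}(\Omega)$ for every $q<\infty$, $\alpha<1$.

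First I would use a twofold application of Stampacchia's zero-set lemma to the nonnegative function $u-\psi\in W^{2,q}_{loc}(\Omega)$. One application gives $\nabla u=\nabla\psi$ $\Ln$-a.e. on $\{u=\psi\}$. Each component $\partial_i(u-\psi)$ then belongs to $W^{1,q}_{loc}(\Omega)$ and vanishes $\Ln$-a.e. on $\{u=\psi\}$, so a second application yields $\partial_j\partial_i(u-\psi)=0$ $\Ln$-a.e. on $\{u=\psi\}$; that is, $D^2u=D^2\psi$ $\Ln$-a.e. on the contact set.

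Next, since $\aaa\in C^{1,1}_{loc}$ by (H1), $u\in W^{2,q}_{loc}$, and $\psi\in C^{1,1}_{loc}$ by (H3), the distributional divergence in \eqref{e:pde} is represented pointwise a.e. by the chain-rule expansion
\begin{equation*}
\div(\aaa(x,u,\nabla u))=\partial_{x_i}a_i(x,u,\nabla u)+\partial_z a_i(x,u,\nabla u)\,\partial_i u+\partial_{\xi_j}a_i(x,u,\nabla u)\,\partial_{ij}^2u,
\end{equation*}
and an analogous identity holds for $\psi$. Combining this with the coincidences $u=\psi$, $\nabla u=\nabla\psi$, $D^2u=D^2\psi$ a.e.\ on $\{u=\psi\}$ and with the continuity of $a_0$, one sees that $-\div(\aaa(x,u,\nabla u))+a_0(x,u,\nabla u)=-\div(\aaa(x,\psi,\nabla\psi))+a_0(x,\psi,\nabla\psi)=h$ $\Ln$-a.e.\ on $\{u=\psi\}$. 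Thus $\zeta=h$ a.e.\ there; because $\zeta\geq 0$, this simultaneously forces $h\geq 0$ at these points, whence $\zeta=h^+\chi_{\{u=\psi\}}$ $\Ln$-a.e.\ in $\Omega$.

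The delicate point is the pointwise second-derivative coincidence on the contact set, since naively Stampacchia's lemma delivers only first-order vanishing; the iteration step works exactly because the first application shows that $\partial_i(u-\psi)$ vanishes on \emph{all} of $\{u=\psi\}$ up to a null set, which is precisely the hypothesis needed to feed $\partial_i(u-\psi)\in W^{1,q}_{loc}$ into a second application. Once this is in place, the rest is a direct identification of $\zeta$ with $h$ via the PDE \eqref{e:pde} and the definition \eqref{e:h} of $h$.
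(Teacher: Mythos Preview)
Your proof is correct and follows essentially the same route as the paper's: the paper simply remarks that by the $W^{2,q}$ regularity of $u$ and the $C^{1,1}_{loc}$ regularity of $\aaa$ one can compute the divergence pointwise and use the locality of weak derivatives to conclude. What you call the iterated Stampacchia zero-set lemma is precisely this locality argument, and your chain-rule expansion is the explicit computation of the divergence the paper alludes to; you have simply spelled out the details.
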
 
\begin{proof}
By the $W^{2,q}$ regularity of $u$ and the $C^{1,1}_{loc}$ regularity of $\aaa$, one can compute the divergence 
in the definition of the measure $\mu$ and use the locality of weak derivatives to conclude.
\end{proof}
}
Optimal $C^{1,1}_{loc}$ regularity {of solutions} follows at once from Gerhardt's result~\cite{Ger73} 
provided $a_0$ is locally Lipschitz continuous. 
\begin{theorem}[Optimal regularity]\label{t:optreg} 
Let (H1) (with (iii)$^\prime$ in place of (iii)), (H2)$^{\prime}$ and (H3) 
hold true, and assume $g\in C^2(\overline{\Omega})$ with $\psi<g$ on $\partial\Omega$, and 
$a_0\in C^{0,1}_{loc}(\Omega\times\R\times\R^n,\R)$.

If $u\in {\mathbb{K}_{\psi,g}}$ is a solution to problem \eqref{e:mainpb}, then $u\in C^{1,1}_{loc}(\Omega)$.
\end{theorem}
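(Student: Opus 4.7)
The strategy is to combine Proposition~\ref{p:tecnico1} and Corollary~\ref{c:density} with Gerhardt's regularity scheme \cite{Ger73}, which proceeds by penalization and a uniform second-order estimate.

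First, by Proposition~\ref{p:tecnico1}, Corollary~\ref{c:density} and Theorem~\ref{t:noptreg}, the solution $u\in W^{2,q}_{loc}\cap C^{1,\alpha}_{loc}(\Omega)$ for all $q\in[1,\infty)$ and all $\alpha\in(0,1)$, and satisfies
$$-\div(\aaa(x,u,\nabla u))+a_0(x,u,\nabla u)= h^+\chi_{\{u=\psi\}}\qquad \Ln\text{-a.e.~in }\Omega,$$
with $h\in L^\infty_{loc}(\Omega)$. So the missing information is merely a local $L^\infty$ bound for $D^2u$; the issue is not the equation away from the free boundary nor the regularity of $\psi$ on the contact set (where $D^2u=D^2\psi$), but rather how to glue the two across $\partial\{u=\psi\}$ without losing a derivative.

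Next, I would reduce to a family of smooth unconstrained problems. For $\eps>0$ let $\beta_\eps\in C^\infty(\R)$ be a non-decreasing approximation of $t\mapsto -\eps^{-1}t^-$, and let $u_\eps\in g+W^{1,p}_0(\Omega)$ solve
$$-\div(\aaa(x,u_\eps,\nabla u_\eps))+a_0(x,u_\eps,\nabla u_\eps)=\beta_\eps(u_\eps-\psi).$$
Comparing with $\psi$, which by (H3) satisfies $-\div(\aaa(x,\psi,\nabla\psi))+a_0(x,\psi,\nabla\psi)=h\leq h^+$, gives the uniform a priori bound $\|\beta_\eps(u_\eps-\psi)\|_{L^\infty_{loc}}\leq\|h^+\|_{L^\infty_{loc}}$, and (H1)--(H2)$'$ ensure existence, uniqueness and $u_\eps\to u$. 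The hypotheses $g\in C^2(\overline\Omega)$ and $\psi<g$ on $\partial\Omega$ are used here to ensure that the contact set $\{u_\eps\leq\psi\}$ stays compactly contained in $\Omega$ for $\eps$ small, so that $\beta_\eps(u_\eps-\psi)\equiv0$ near $\partial\Omega$ and standard boundary regularity applies.

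The main obstacle, and the heart of Gerhardt's argument, is a uniform $W^{2,\infty}_{loc}$ bound on $u_\eps$. Differentiating the penalized equation in a direction $e$, one obtains a linear uniformly elliptic equation for $\partial_e u_\eps$ in which the derivative of the penalization contributes the zero-order term $\beta_\eps'(u_\eps-\psi)\geq 0$ with a favourable sign, while the $C^{1,1}_{loc}$ regularity of $\aaa$ and the newly assumed Lipschitz continuity of $a_0$ keep the remaining lower-order terms uniformly in $L^\infty_{loc}$. A Bernstein/maximum-principle argument then produces a uniform $L^\infty_{loc}$ estimate for $\nabla u_\eps$, which fed back into the equation together with the $C^{1,\alpha}$ control gives a uniform bound on $D^2 u_\eps$. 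The Lipschitz hypothesis on $a_0$ is exactly what prevents a loss of a derivative at this step; without it one only recovers sub-optimal Hölder regularity of the gradient. Passing to the limit $\eps\to0^+$, the uniform bound is preserved, and by uniqueness the limit coincides with $u$, yielding $u\in C^{1,1}_{loc}(\Omega)$.
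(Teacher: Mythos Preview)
Your proposal is correct and follows essentially the same approach as the paper: both reduce the optimal regularity to Gerhardt's argument in \cite{Ger73}, with the key observation that the Lipschitz hypothesis on $a_0$ combined with the almost-optimal regularity $u\in W^{2,q}_{loc}$ from Theorem~\ref{t:noptreg} ensures $a_0(\cdot,u,\nabla u)\in W^{1,q}_{loc}(\Omega)$ for all $q<\infty$, which is precisely what is needed in Gerhardt's differentiated equation (cf.~formula~(16) in \cite{Ger73}). The paper's proof simply cites \cite{Ger73} and isolates this one point, whereas you spell out the penalization mechanism and the Bernstein-type estimate in more detail; the content is the same.
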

\begin{proof}
The proof is essentially that of \cite{Ger73} despite the forcing term, i.e.~$a_0(\cdot,u,\nabla u)$ in our 
case, is not in $C^{0,1}$ as required in the statement there. Nevertheless, a careful inspection of 
that proof shows that the slightly weaker assumption $a_0(\cdot,u,\nabla u)\in W^{1,q}_{loc}(\Omega)$ 
for all $q\in[1,\infty)$ actually suffices (cf. formula (16) there). 
In our setting this property is an immediate outcome of the regularity hypothesis on $a_0$ 
and Theorem~\ref{t:noptreg} above.
\end{proof}
\begin{remark}
We point out that for $p\neq 2$ the study of degenerate fields $\aaa$ deserves additional efforts. 
Optimal regularity of solutions to \eqref{e:mainpb} with $\aaa(\xi)=|\xi|^{p-2}\xi$ and $a_0(x,z)=f(x)z$, 
$f\in L^{\infty}(\Omega)$, has been established only recently in \cite{AndLindShah} (cf. the bibliography 
there for more detailed references, {and also the results in \cite{Fu90}}). 
That paper deals also with the case $\psi\in C^{1,\beta}(\Omega)$, 
$\beta\in(0,1)$, that is not covered by our methods. More precisely, it is established there that solutions 
are $C^{1,\beta\wedge\sfrac 1{(p-1)}}_{loc}(\Omega)$, $\beta\in(0,1]$, and
actually $C^{1,\beta}_{loc}$ in the homogeneous setting $f\equiv 0$.

Building upon Proposition~\ref{p:tecnico1} and a careful analysis of the estimates in \cite[Chapter~5]{ManfrediPhD}
one can actually show that $u\in C^{1,\alpha}_{loc}(\Omega)$, for all $\alpha\in(0,\frac 1{p-1}]\cap(0,1)$ 
for fields satisfying (H1) and the degenerate analogue of (H2)$^\prime$.
\end{remark}

We end this subsection pointing out that the conclusions of Proposition~\ref{p:tecnico1} and Theorems~\ref{t:noptreg} 
and \ref{t:optreg} extend to the more general setting of fields $a_0$ satisfying the so called \emph{unnatural} 
growth conditions following the terminology of Giusti~\cite{Gi03} (cf. formula (6.15) there), of which item (ii) 
in (H1) is a simple instance.

This claim is also true in case $a_0$ satisfies the \emph{natural} growth conditions (cf. \cite[formula (6.18)]{Gi03}) 
provided bounded solutions are taken into account. Existence of such solutions is guaranteed for bounded obstacles 
and bounded boundary data, for instance.

\subsection{Free boundary regularity in the variational case}

We are now ready to state and prove the main result of the paper. From now on we restrict to the variational
case, in which $\aaa=\nabla_\xi F$ and $a_0=\partial_z F$ for suitable integrands $F$. 
We need to rephrase assumptions (H1), and (H2)$^{\prime}$ terms of the energy density $F$ itself. 
In passing we note that item (i) in (H1) is not needed provided $F$ satisfies suitable convexity and growth 
conditions in view of the Direct Method of the Calculus of Variations.
Indeed, item (i) in (H1) has been used only in the proof of existence of solutions to \eqref{e:mainpb}.
\begin{theorem}\label{t:main}
 Let $\Om\subset\Rn$ be smooth, bounded and open, and $p\in(1,\infty)$. Assume (H3) for $\psi$, and 
 $g\in C^2(\overline{\Omega})$ with $\psi<g$ on $\partial\Omega$.  

 Let $F\in C^{2,1}_{loc}(\Omega\times\R\times\R^n)$ be satisfying 
 \begin{equation}\label{e:Fgrowth}
  c_1|\xi|^p-\phi(x)\leq F(x,z,\xi)\leq c_2|\xi|^p+c_3|z|^{p^\ast}+\phi(x)\qquad 
 \end{equation}
 for all $z\in\R$, $\xi\in\R^n$, for $\Ln$ a.e. $x\in\Omega$, where $\phi\in L^1(\Omega)$, $c_1,\,c_2>0$ 
 and $c_3\geq 0$, and $p^\ast$ is the Sobolev exponent of $p$ (thus $p^\ast$ is any exponent if $p\geq n$).

 Suppose that items (ii), (iii)$^\prime$ in (H1) are satisfied by $\aaa=\nabla_\xi F$ and 
 $a_0=\partial_z F$, and in addition assume $F(x,z,\cdot)$ to be uniformly convex uniformly in $(x,z)$ 
 {w.r.to $\xi$}, i.e.~there exists $\nu>1$ such that for all $x\in\Omega$, $z\in\R$ and $\xi,\,\eta\in\R^n$
 \begin{equation}\label{e:Funcvx}
 \nu^{-1}(1+|\eta|)^{p-2}|\xi|^2\leq \nabla^2_\xi F(x,z,\eta)\xi\cdot\xi\leq \nu(1+|\eta|)^{p-2}|\xi|^2.
 \end{equation}
 Then, {the minimum problem in \eqref{e:mainpb2} has (at least) a solution $u\in \mathbb{K}_{\psi,g}$, 
 and, moreover, every solution belongs to $C^{1,1}_{loc}(\Omega)$.}  
 
 Let $u\in \mathbb{K}_{\psi,g}$ be a solution. If, moreover, {$\psi$ satisfies 
 \begin{itemize}
 \item[(H4)] for some constant $c_0>0$ we have for $\Ln$ a.e. on $\Omega$  
 \[
 h=-\div\big(\nabla_\xi F(x,\psi,\nabla\psi)\big)+\partial_zF(x,\psi,\nabla\psi)\geq c_0>0;
 \]
 \item[(H5)] for some $\alpha\in(0,1)$
 \begin{equation*}
 \div\big(\nabla_\xi F(\cdot,u,\nabla\psi))\in C^{0,\alpha}_{loc}(\Omega),
 \end{equation*}
\end{itemize}}
 then the free boundary decomposes as $\de\{u=\psi\}\cap\Omega=\Reg(u)\cup\Sing(u)$, where $\Reg(u)$ and $\Sing(u)$ 
 are called its regular and singular part, respectively. Moreover, $\Reg(u)\cap\Sing(u)=\emptyset$ and
\begin{itemize}
\item[(i)] $\Reg(u)$ is relatively open in $\de \{u = \psi\}$
and, for every point $x_0 \in \Reg(u)$, there exist $r=r(x_0)>0$
and $\beta= \beta(x_0) \in (0,1)$ such that
$\Reg(u) \cap B_r(x_0)$ is a $C^{1,\beta}$ submanifold of dimension $n-1$;
\item[(ii)] $\Sing(u) = \cup_{k=0}^{n-1} S_k$, with $S_k$ contained in the union of
at most countably many submanifolds of dimension $k$ and class $C^1$.
\end{itemize}
\end{theorem}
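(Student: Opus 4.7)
The plan is to reduce the nonlinear obstacle problem to the quadratic one of Theorem~\ref{t:mainFGS} via a linearization of the Euler--Lagrange equation along $u$. Existence of a minimizer $u\in\mathbb{K}_{\psi,g}$ follows by the Direct Method from the lower bound in \eqref{e:Fgrowth} (coercivity via Poincar\'e's inequality on $g+W^{1,p}_0(\Omega)$) together with the uniform convexity \eqref{e:Funcvx} (weak lower semicontinuity). Every such $u$ satisfies \eqref{e:mainpb} with $\aaa=\nabla_\xi F$, $a_0=\partial_z F$: since (H2)$^\prime$ is exactly \eqref{e:Funcvx} and (ii), (iii)$^\prime$ of (H1) are explicitly assumed, Theorem~\ref{t:optreg} yields $u\in C^{1,1}_{loc}(\Omega)$.

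Set $w:=u-\psi\geq 0$. By Proposition~\ref{p:tecnico1}, Corollary~\ref{c:density} and (H4),
\[
-\div\big(\nabla_\xi F(x,u,\nabla u)\big)+\partial_z F(x,u,\nabla u)=h(x)\,\chi_{\{w=0\}}\quad\text{a.e. in $\Omega$}.
\]
The key step is the pointwise linearization
\[
\nabla_\xi F(x,u,\nabla u)-\nabla_\xi F(x,u,\nabla\psi)=\mathbb{A}(x)\nabla w,\qquad \mathbb{A}(x):=\int_0^1\nabla^2_\xi F\big(x,u,\nabla\psi+s\nabla w\big)\,ds,
\]
which defines a symmetric matrix, locally Lipschitz in $x$ (since $F\in C^{2,1}_{loc}$ and $u,\psi\in C^{1,1}_{loc}$) and uniformly elliptic on compactly contained subsets by \eqref{e:Funcvx}. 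Substituting,
\[
-\div(\mathbb{A}\nabla w)+\tilde f=h\,\chi_{\{w=0\}},\qquad \tilde f:=\partial_z F(\cdot,u,\nabla u)-\div\big(\nabla_\xi F(\cdot,u,\nabla\psi)\big),
\]
with $\tilde f\in C^{0,\alpha}_{loc}(\Omega)$ thanks to (H5) and the $C^{1,1}_{loc}$ regularity of $u$.

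A short Taylor expansion in the $z$--variable gives $|\tilde f-h|\leq C(|w|+|\nabla w|)$ a.e., and the $C^{1,1}$ regularity of $w$ with $w=\nabla w=0$ on $\{w=0\}$ yields $\|w\|_{L^\infty(B_r(x_0))}+\|\nabla w\|_{L^\infty(B_r(x_0))}=O(r)$ on any ball $B_r(x_0)\Subset\Omega$ centered at a free boundary point $x_0$. Combined with $h\geq c_0$ a.e.\ and the continuity of $\tilde f$, this forces $\tilde f\geq c_0/2>0$ on $B=B_r(x_0)$ for $r$ small, and the same estimate identifies $\tilde f=h$ a.e.\ on $\{w=0\}\cap B$. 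Since $w\in W^{2,q}_{loc}$ and $w\geq 0$, Stampacchia's lemma gives $\nabla w=0$ and $\nabla^2 w=0$ a.e.\ on $\{w=0\}$, hence $\div(\mathbb{A}\nabla w)=0$ a.e.\ there, and the displayed PDE rewrites equivalently as $-\div(\mathbb{A}\nabla w)+\tilde f=\tilde f\,\chi_{\{w=0\}}$ on $B$---exactly the Euler--Lagrange equation of the convex obstacle problem for $\int_B(\mathbb{A}\nabla v\cdot\nabla v+2\tilde f v)\,dx$ with zero obstacle and Dirichlet datum $w|_{\partial B}\in W^{1,\infty}(\partial B)$. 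By uniqueness, $w$ is its minimizer on $B$, so Theorem~\ref{t:mainFGS} applied on $B$ gives the claimed decomposition of $\partial\{u=\psi\}\cap B$; since $x_0$ was arbitrary, the global conclusion follows.

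The main obstacle I expect is precisely the nondegeneracy $\tilde f\geq c_0/2$ in a full neighborhood of the free boundary: this is what makes (H5) a natural and essentially sharp assumption, since under (H3) alone $\nabla^2\psi$ is merely in $L^\infty$ and $\div(\nabla_\xi F(\cdot,u,\nabla\psi))$ need not be H\"older continuous. Once this nondegeneracy is in place, the translation of the nonlinear Euler--Lagrange PDE into the linear one handled by \cite{FocGelSp15} is routine.
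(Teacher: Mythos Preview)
Your proposal is correct and follows essentially the same approach as the paper: existence via the Direct Method, optimal $C^{1,1}_{loc}$ regularity via Theorem~\ref{t:optreg}, the linearization $\nabla_\xi F(x,u,\nabla u)-\nabla_\xi F(x,u,\nabla\psi)=\mathbb{A}\nabla w$ with $\mathbb{A}(x)=\int_0^1\nabla^2_\xi F(x,u,\nabla\psi+s\nabla w)\,ds$ (this is Lemma~\ref{l:tecnico2}), verification that $\tilde f\geq c_0/2$ on a neighborhood of the free boundary, and finally the appeal to Theorem~\ref{t:mainFGS}. The only cosmetic differences are that the paper carries out the nondegeneracy check via moduli of continuity on the open set $\Omega_\eps':=\{0\le w<\eps\}\cap\{|\nabla w|<\eps\}$ rather than on small balls, and asserts directly that $w$ minimizes the quadratic energy over $\mathbb{K}_{g-\psi,0}$ from the PDE~\eqref{e:pde2}; your slightly more explicit passage through the variational inequality and uniqueness is equivalent.
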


\begin{remark}
 In case $F=F(x,\xi)$ the structural conditions imposed on $F$, i.e. convexity and \eqref{e:Fgrowth},
 imply item (ii) in (H1) (cf. \cite[Lemma~5.2]{Gi03}). Therefore, besides uniform convexity, the only 
 nontrivial assumption on $F$ is (iii)$^\prime$ in (H1). In turn, the latter is clearly satisfied in the 
 autonomous case $F=F(\xi)$.
\end{remark}

{
\begin{remark}
 Assumption (H4) corresponds to the well-known concavity assumption on the obstacle function $\psi$ in 
 the case of the Laplacian, or better to the localized form of such a condition introduced in \cite{CaffRiv}.
 Simple examples show that (H4) is a necessary request to expect regular free boundaries.
 \end{remark}
}
\begin{remark}
 In view of the regularity assumptions on $F$ and the optimal regularity of $u$, assumption {(H5)} 
 is basically an hypothesis on the obstacle $\psi$ that can be enforced by assuming more regularity on $\psi$ 
 itself. For instance, it is implied by taking $\psi\in C^{2,\alpha}_{loc}(\Omega)$.
 
 Finally, {non trivial examples show that a qualified continuity hypothesis on the relevant operator calculated on the obstacle function, weaker than H\"older continuity 
 imposed in (H5), 
 is actually necessary to conclude free boundary} regularity already in the classical case of the Laplacian 
 {(cf. \cite{Blank, Monneau}).} 
 \end{remark}

To establish Theorem~\ref{t:main} we introduce the ensuing linearization; in this way we rewrite the PDE 
in \eqref{e:pde} as a locally uniform elliptic equation with suitable locally Lipschitz continuous matrix 
coefficients in case the gradient of the solution itself shares such a regularity.
\begin{lemma}\label{l:tecnico2}
Let (H1)-(H4) hold true, and let $u\in C^{1,1}_{loc}(\Omega)$ be a solution of \eqref{e:mainpb2}.
Then, there exists a {symmetric} matrix field $\A:\Omega\to\R^{n\times n}$ such that 
{
\begin{equation}\label{e:pde2}
 \div\big(\A(x)\nabla (u-\psi)\big)=\big(-\div(\nabla_\xi F(x,u,\nabla \psi))+\partial_zF(x,u,\nabla u)\big)\,\chi_{\{u>\psi\}}
\end{equation}}
 $\Ln$ a.e. in $\Omega$ and in $\mathcal{D}^\prime(\Omega)$; with $\A$ satisfying 
 \begin{itemize}
\item[(i)] $\A\in C^{0,1}_{loc}(\Om,\R^{n\times n})$, 
\item[(ii)] 
for all $K\subset\hskip-0.125cm\subset\Omega$ there is $\lambda_K\geq 1$ for which
\begin{equation}\label{e:Aprop}
\lambda^{-1}_K|\xi|^2\leq \A(x) \xi\cdot\xi\leq\lambda_K|\xi|^2
\quad\mbox{for all $x \in K$ and for all $\xi\in\R^n$}.
\end{equation}
\end{itemize}
\end{lemma}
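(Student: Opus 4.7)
The plan is to start from the fact, ensured by Proposition~\ref{p:tecnico1} and Corollary~\ref{c:density} applied to the variational case $\aaa=\nabla_\xi F$, $a_0=\partial_z F$ (so that their hypotheses are satisfied under those of Theorem~\ref{t:main}), that the minimizer $u$ satisfies
\[
-\div\bigl(\nabla_\xi F(x,u,\nabla u)\bigr)+\partial_z F(x,u,\nabla u)=h\,\chi_{\{u=\psi\}}
\]
in the $\Ln$-a.e.\ and distributional sense on $\Omega$, where $h^+=h\geq c_0>0$ by (H4). The goal is to linearize the principal part around $\nabla\psi$ through the fundamental theorem of calculus: set
\[
\A(x):=\int_0^1\nabla^2_\xi F\bigl(x,u(x),\nabla\psi(x)+t\,\nabla(u-\psi)(x)\bigr)\,dt,
\]
which is symmetric since $\nabla^2_\xi F$ is, and obtain the crucial identity
\[
\nabla_\xi F(x,u,\nabla u)-\nabla_\xi F(x,u,\nabla\psi)=\A(x)\,\nabla(u-\psi)\qquad\text{for $\Ln$-a.e.\ $x\in\Omega$.}
\]

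The quantitative properties of $\A$ follow from this representation. For the local ellipticity (ii), fix $K\subset\hskip-0.125cm\subset\Omega$: since (H3) and $u\in C^{1,1}_{loc}(\Omega)$ yield a uniform bound $M_K$ on $|\nabla\psi(x)+t\nabla(u-\psi)(x)|$ over $K\times[0,1]$, the two-sided bound \eqref{e:Funcvx} integrated in $t$ produces \eqref{e:Aprop} with $\lambda_K$ depending on $M_K$, in both the superquadratic ($p\geq 2$) and subquadratic ($1<p<2$) ranges. For the Lipschitz regularity (i), the hypothesis $F\in C^{2,1}_{loc}$ makes $\nabla^2_\xi F$ locally Lipschitz in all its arguments, and together with $u,\psi\in C^{1,1}_{loc}$ this makes the integrand locally Lipschitz in $x$ uniformly in $t\in[0,1]$, whence $\A\in C^{0,1}_{loc}(\Omega,\R^{n\times n})$.

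For the PDE \eqref{e:pde2}, substituting the linearization into the nonlinear equation above yields, in $\mathcal{D}'(\Omega)$,
\[
\div(\A\,\nabla(u-\psi))=-\div\bigl(\nabla_\xi F(x,u,\nabla\psi)\bigr)+\partial_z F(x,u,\nabla u)-h\,\chi_{\{u=\psi\}}.
\]
On the open noncontact set $\{u>\psi\}$ the indicator vanishes and this is exactly \eqref{e:pde2} (where $\chi_{\{u>\psi\}}=1$). On the contact set $\{u=\psi\}$ I would argue $\Ln$-a.e.\ as follows: the classical locality of weak derivatives, applied to $u-\psi\in W^{2,\infty}_{loc}(\Omega)$, gives $\nabla(u-\psi)=0$ and $\nabla^2(u-\psi)=0$ a.e.\ on $\{u=\psi\}$, whence $\div(\A\,\nabla(u-\psi))=0$ a.e.\ there. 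The same locality, applied to the field $\nabla_\xi F(\cdot,u,\nabla\psi)$---which coincides with $\nabla_\xi F(\cdot,\psi,\nabla\psi)$ on $\{u=\psi\}$---and the pointwise identity $\partial_z F(x,u,\nabla u)=\partial_z F(x,\psi,\nabla\psi)$ $\Ln$-a.e.\ on $\{u=\psi\}$ turn the right hand side into $h-h=0$ a.e.\ on $\{u=\psi\}$. Both sides of \eqref{e:pde2} are in $L^\infty_{loc}(\Omega)$ (the left by $\A\in C^{0,1}_{loc}$ and $u-\psi\in C^{1,1}_{loc}$, the right by (H5) and the continuity of $\partial_z F$), so the $\Ln$-a.e.\ identity upgrades to a distributional one.

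The main technical point I foresee is the bookkeeping on the contact set $\{u=\psi\}$: it requires a twofold use of the Stampacchia-type locality of weak derivatives---both to annihilate $\div(\A\nabla(u-\psi))$ a.e.\ and to match the apparent residual $-\div(\nabla_\xi F(x,u,\nabla\psi))+\partial_z F(x,u,\nabla u)$ with the obstacle defect $h$---together with the passage from $\Ln$-a.e.\ to distributional equality, which rests on the $L^\infty_{loc}$-regularity afforded by (H5) and the optimal $C^{1,1}_{loc}$ regularity already established for $u$ in Theorem~\ref{t:main}.
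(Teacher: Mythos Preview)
Your argument is correct and follows essentially the same route as the paper: define $\A$ via the integral $\int_0^1\nabla^2_\xi F(x,u,\nabla\psi+t\nabla(u-\psi))\,dt$, use the fundamental theorem of calculus to linearize $\nabla_\xi F(x,u,\nabla u)-\nabla_\xi F(x,u,\nabla\psi)=\A\nabla(u-\psi)$, and then reduce the identification of the right-hand side on $\{u=\psi\}$ to the inclusion $\{u=\psi\}\subseteq\{\nabla u=\nabla\psi\}$ together with locality of weak derivatives. One small slip: in your last paragraph you invoke (H5), which is \emph{not} among the hypotheses of the lemma; the $L^\infty_{loc}$ regularity you actually need for $-\div(\nabla_\xi F(\cdot,u,\nabla\psi))$ already follows from $\nabla_\xi F\in C^{1,1}_{loc}$ (part of (H1)) and $u,\psi\in C^{1,1}_{loc}$, so simply replace that citation.
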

\begin{proof}
We start off rewriting the Euler-Lagrange equation \eqref{e:pde} as follows {
\begin{equation}\label{e:pde3}
\div\big(\nabla_\xi F(x,u,\nabla u)-\nabla_\xi F(x,u,\nabla \psi)\big)=
 \big(-\div(\nabla_\xi F(x,u,\nabla \psi))+\partial_zF(x,u,\nabla u)\big)\,\chi_{\{u>\psi\}}.
\end{equation}}
In claiming the last equality we have used {Corollary~\ref{c:density}, 
assumption (H4) and} the inclusion 
\[
\{u=\psi\}\subseteq\{\nabla u=\nabla\psi\},
\]
consequence of the unilateral obstacle condition $u\geq\psi$ on $\Omega$ and the 
regularity of both $u$ and $\psi$.
Then set $w:=u-\psi$, and note that for all $x$ in $\Omega$ 
{
\begin{multline}\label{e:integral}
  \nabla_\xi F(x,u(x),\nabla u(x))-\nabla_\xi F(x,u(x),\nabla \psi(x))=
  \nabla_\xi F(x,u(x),\nabla w(x)+\nabla\psi(x))-\nabla_\xi F(x,u(x),\nabla \psi(x))\\
  ={\Big(\int_0^1\nabla_\xi^2F\big(x,u(x),\nabla\psi(x)+t\nabla w(x)\big)dt\Big)}
  \nabla w(x)=:{\mathbb A}(x)\nabla w(x).
 \end{multline}}
From \eqref{e:pde3} and \eqref{e:integral}, we conclude that $w$ satisfies \eqref{e:pde2}. 
Moreover, being $u\,,\psi\in C^{1,1}_{loc}(\Omega)$ and 
{$F\in C^{2,1}_{loc}(\Omega\times\R\times\R^n)$}, 
we deduce that item (i) in the statement is satisfied, as well. 
Moreover, for all $x\in\Omega$ and for all $\xi\in K$, $K\subset\R^n$ a compact set, we have
\begin{multline*}
\nu^{-1}(2^{p-2}\wedge 1)|\xi|^2\int_0^1\big(1+|\nabla\psi(x)+t\nabla w(x)|\big)^{p-2}\,dt
\leq \A(x)\xi\cdot\xi\\
={\int_0^1\nabla_\xi^2 F\big(x,u(x),\nabla\psi(x)+t\nabla w(x)\big)\xi\cdot\xi\,dt
\leq\|\nabla_\xi^2 F\|_{L^\infty(K\times B_{r_K}\times B_{r_K},\R^{n\times n})}|\xi|^2,}
\end{multline*}
with $r_K:=\sup_K(|u|+|\nabla\psi|+|\nabla w|)$. The inequality on the left hand side above is an easy 
consequence of the coercivity condition in \eqref{e:acoercive}.
Ellipticity then easily follows if $p\geq 2$, for $p\in(1,2)$ instead we use that $u\,,\psi\in C^{1,1}_{loc}(\Omega)$.
Finally, the upper bound in \eqref{e:Aprop} follows easily in both cases. The conclusion then follows.
\end{proof}

We are ready to prove Theorem~\ref{t:main} as a direct consequence of Theorem~\ref{t:mainFGS} and 
Lemma~\ref{l:tecnico2}.
\begin{proof}[Proof of Theorem~\ref{t:main}]
Existence of solutions to \eqref{e:mainpb2} follows from \cite[Theorem~4.5]{Gi03} thanks to 
the convexity of $\xi\mapsto F(x,z,\xi)$ and the growth conditions \eqref{e:Fgrowth}. The former 
guarantees lower semicontinuity of the associated functional in the weak $W^{1,p}$ topology, the 
latter ensures its coercivity over ${\mathbb{K}_{\psi,g}}$. Therefore, the Direct Method of the Calculus 
of Variations applies.

Moreover, any minimizer $u$ is $C^{1,1}_{loc}(\Omega)$. To this aim, it suffices to note that $u$ 
satisfies the PDE in \eqref{e:pde}, since the derivation of the latter is independent from item (i) 
in (H1). Note that assumption (H2)$^\prime$ corresponds to \eqref{e:Funcvx}. 

{Hence, in view of Lemma~\ref{l:tecnico2},} to conclude the free boundary analysis we only need 
 to check that, locally in $\Omega$, we may apply Theorem~\ref{t:mainFGS} with matrix field $\A$ as above, 
 with 
 \[
{f:=-\div(\nabla_\xi F(x,u,\nabla \psi))+\partial_zF(x,u,\nabla u)},
 \]
with $0$ obstacle and with boundary datum {$g-\psi$. Indeed, thanks to \eqref{e:pde2}, $w=u-\psi$ is the 
minimizer of the quadratic energy
\[
 \EEE[v]=\int_\Omega\big(\A(x)\nabla v(x)\cdot \nabla v(x)+2f(x)\,v(x)\big)\,dx
\]
over $\mathbb{K}_{g-\psi,0}$. In addition, note that $\partial\{w=0\}\cap\Omega=\partial\{u=\psi\}\cap\Omega$.}

 {With the aim of applying Theorem~\ref{t:mainFGS}} we first recall that $\{u=\psi\}\subseteq\{\nabla u=\nabla\psi\}$, being $u\geq\psi$ on $\Omega$. 
 Thus, given $\Omega^\prime\subset\hskip-0.125cm\subset\Omega$ and any $\eps>0$, the set 
 $\Omega_\eps^\prime:=\{0\leq u-\psi<\eps\}\cap\{|\nabla(u-\psi)|<\eps\}\cap\Omega^\prime$ is open and such that
 $\{u=\psi\}\cap\Omega^\prime\subset\Omega_\eps^\prime$ in view of the remark above. Moreover, as
 {$h=-\div(\nabla_\xi F(x,\psi,\nabla \psi))+\partial_zF(x,\psi,\nabla\psi)\geq c_0>0$} (cf. (H4)), we have on 
 $\Omega_\eps^\prime$
{
\begin{multline*}
  f\geq h-\|h-f\|_{L^\infty(\Omega_\eps^\prime)}\\
  \geq c_0-\|\partial_zF(\cdot,\psi,\nabla \psi)-\partial_zF(\cdot,u,\nabla u)\|_{L^\infty(\Omega_\eps^\prime)}
  -\|\div(\nabla_\xi F(\cdot,\psi,\nabla \psi))-\div(\nabla_\xi F(\cdot,u,\nabla \psi))\|_{L^\infty(\Omega_\eps^\prime)}\\
  \geq c_0-\omega_{\partial_zF}(2\eps)-\omega_{\nabla_{x,\xi}^2 F}(\eps)
  -\|\nabla u\|_{L^\infty(\Omega_\eps^\prime,\R^n)}\omega_{\nabla_{z,\xi}^2F}(\eps)\\
  -\eps\|\nabla_{z,\xi}^2F(\cdot,\psi,\nabla\psi)\|_{L^\infty(\Omega_\eps^\prime)}
  -\|\nabla^2\psi\|_{L^\infty(\Omega_\eps^\prime,\R^{n\times n})}\omega_{\nabla_\xi^2F}(\eps),
 \end{multline*}}
denoting with {$\omega_{\vartheta}$} a modulus of continuity of the relevant function {$\vartheta$} 
on $\Omega^\prime$ (recall that $F\in C^{2,1}_{loc}$).
 Therefore, we can choose $\eps>0$ sufficiently small in order to accomplish the condition $f\geq\sfrac{c_0}2>0$
 on $\Omega_\eps^\prime$.
 In addition, $f\in C^{0,\alpha}_{loc}(\Omega)$ by hypotheses (H3), {(H5)} and by Theorem~\ref{t:noptreg}. 
 Hence, all the conditions in the statement of Theorem~\ref{t:mainFGS} are satisfied on the open set 
 $\Omega_\eps^\prime$, thus the conclusions follow straightforwardly.
\end{proof}

\section{Locally coercive vector fields}\label{s:lcvf}

The analysis in Section~\ref{s:cvf} does not cover many cases of interest, most relevantly
that of the area functional where
\[
 F(\xi)=\sqrt{1+|\xi|^2},\qquad
\aaa(\xi)=\nabla F(\xi)=\frac{\xi}{\sqrt{1+|\xi|^2}}.
\]
The latter vector field clearly does not fulfill \eqref{e:acoercive} in (H2)$^{\prime}$ 
being $F$ strictly but not uniformly convex. 
Moreover, for such a vector field also the existence of solutions to the corresponding variational 
inequality is not guaranteed in general and requires additional conditions on the set $\Omega$, on
the obstacle $\psi$ and on the boundary datum $g$ (cf.~\cite[Section~4 of Chapter~IV]{KS}), 
\cite[Chapter~1]{Gi03} and the references therein).
The same considerations hold more generally for \emph{locally coercive} vector fields 
$\aaa$ (cf. \cite[Section~4 of Chapter~IV]{KS} in the autonomous case and Theorem~\ref{t:Ger}
below). 

Assuming a priori the existence of a solution and its global Lipschitz continuity, 
the next result due to Gerhardt implies its global $C^{1,1}$ regularity. 
\begin{theorem}[Theorem~0.1 \cite{Ger85}]\label{t:Ger}
 Let $\Omega$ be of class $C^{3,\alpha}$, for some $\alpha\in(0,1)$, $g\in C^{2,1}(\overline{\Omega})$ 
 and $\psi\in C^{1,1}(\overline{\Omega})$. Let $a_0\in C^{1,1}(\Omega\times\R\times\R^n)$, and assume 
 that $\aaa(\cdot,\cdot,\xi)$ is $C^{1,1}(\Omega\times\R,\R^n)$ for all $\xi\in\R^n$, that $\aaa(x,z,\cdot)$ 
 is $C^{2,1}(\R^n,\R^n)$ for all $(x,z)\in\Omega\times\R$, and that for all $(x,z,\eta)\in\Omega\times\R\times\R^n$ 
  \[
  \partial_\xi\aaa(x,z,\eta)\xi\cdot\xi>0\qquad\text{for all $\xi\neq 0$}.
  \]
 If $u\in C^{0,1}(\Omega)$ is a solution of the variational inequality in \eqref{e:mainpb} over the set 
 \[
 \{v\in C^{0,1}(\Omega):\,v\geq\psi\,\text{ on $\Omega$},\quad v=g\,\text{ on $\partial\Omega$}\}
 \]
 then $u\in C^{1,1}(\Omega)$.
\end{theorem}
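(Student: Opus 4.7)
The strategy is to reduce the locally coercive setting of Theorem~\ref{t:Ger} to the uniformly coercive framework of Section~\ref{s:cvf}, exploiting the a priori Lipschitz bound on $u$. Set $L := \|\nabla u\|_{L^\infty(\Om)} \vee \|\nabla g\|_{L^\infty(\overline{\Om})} \vee \|\nabla \psi\|_{L^\infty(\overline{\Om})}$ and let $M$ bound $|u|$. The hypothesis $\partial_\xi\aaa(x,z,\eta)\xi\cdot\xi > 0$ for all $\xi\neq 0$ and $\eta\in\R^n$, combined with $\aaa\in C^{2,1}$ in $\xi$ and $C^{1,1}$ in $(x,z)$, yields by a compactness argument a uniform ellipticity constant $\lambda_L\geq 1$ for $\partial_\xi \aaa$ on $\overline{\Om}\times[-M,M]\times\overline{B_{2L}}$.

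I would then construct a modified smooth vector field $\widetilde{\aaa}$ on $\Om\times\R\times\R^n$ that coincides with $\aaa$ whenever $|\xi|\leq L$, retains the $C^{1,1}$ regularity in $(x,z)$ and $C^{2,1}$ regularity in $\xi$, and satisfies \eqref{e:acoercive} with $p=2$ globally in $\xi$. This is done by smoothly interpolating between $\aaa(x,z,\xi)$ and a suitable uniformly elliptic affine field on the annulus $L<|\xi|<2L$, taking advantage of the strict pointwise monotonicity inside $B_{2L}$. Since $|\nabla u|\leq L$ almost everywhere, $u$ remains a solution of the variational inequality \eqref{e:mainpb} associated with $\widetilde{\aaa}$, and the pair $(\widetilde{\aaa}, a_0)$ now fits the hypotheses of Theorem~\ref{t:optreg}, which yields $u\in C^{1,1}_{loc}(\Om)$.

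To upgrade to global $C^{1,1}(\Om)$ regularity I would perform $C^{1,1}$ estimates up to $\partial\Om$. Under the natural assumption $\psi<g$ on $\partial\Om$, the coincidence set stays away from $\partial\Om$ in a collar neighborhood, so $u$ solves the smooth quasilinear elliptic equation $-\div(\widetilde{\aaa}(x,u,\nabla u))+a_0(x,u,\nabla u)=0$ there, with Dirichlet datum $g\in C^{2,1}(\overline{\Om})$; classical boundary $C^{2,\alpha}$ regularity for such problems on $C^{3,\alpha}$ domains then applies. The main obstacle is twofold: first, the construction of the extension $\widetilde{\aaa}$ preserving strict monotonicity, the required regularity and the growth conditions needed by Theorem~\ref{t:optreg}; second, the delicate matching of interior and boundary estimates when the coincidence set may approach $\partial\Om$. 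The key tool for the latter is the Lewy--Stampacchia type bound $0\leq -\div(\aaa(\cdot,u,\nabla u))+a_0(\cdot,u,\nabla u)\leq h^+$ produced in Proposition~\ref{p:tecnico1}, which guarantees an $L^\infty$ right-hand side and thus permits the classical quasilinear $W^{2,q}$ estimates to be localized near $\partial\Om$ and combined with the interior conclusion.
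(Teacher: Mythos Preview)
The paper does not prove Theorem~\ref{t:Ger}: it is quoted verbatim from Gerhardt \cite{Ger85} and used as a black box. So there is no ``paper's own proof'' to compare with, and you are effectively proposing an independent argument.

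Your reduction to the coercive setting via the a priori Lipschitz bound is the natural idea, and the observation that $\aaa(x,u,\nabla u)=\widetilde{\aaa}(x,u,\nabla u)$ once $|\nabla u|\le L$ is correct, so that $u$ solves the modified variational inequality and Theorem~\ref{t:optreg} delivers $u\in C^{1,1}_{loc}(\Omega)$. This part is fine.

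The genuine gap is in the passage from $C^{1,1}_{loc}$ to global $C^{1,1}(\Omega)$. You explicitly invoke ``the natural assumption $\psi<g$ on $\partial\Omega$'' to keep the coincidence set away from the boundary, but this hypothesis is \emph{not} part of Theorem~\ref{t:Ger}; the statement allows $\psi=g$ on (parts of) $\partial\Omega$, and handling the case where the contact set reaches the boundary is precisely the content of Gerhardt's 1985 paper. Your fallback, namely using the Lewy--Stampacchia bound from Proposition~\ref{p:tecnico1} to get an $L^\infty$ right-hand side and then quoting ``classical quasilinear $W^{2,q}$ estimates'' near $\partial\Omega$, does not by itself give $W^{2,\infty}$ up to the boundary: $L^\infty$ right-hand side yields $W^{2,q}$ for all $q<\infty$, not $q=\infty$. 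Obtaining the second-derivative bound up to $\partial\Omega$ without a gap between $\psi$ and $g$ requires the barrier and test-function constructions carried out in \cite{Ger85}, which your sketch does not supply. In short, your argument recovers the interior conclusion but not the full global statement as written.
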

Therefore, with Theorem~\ref{t:Ger} at hand, if a locally coercive vector field corresponds to an integrand $F$ 
satisfying hypothesis {(H5)} 
of Theorem~\ref{t:main} we can argue as in Lemma~\ref{l:tecnico2} 
and in the second part of the proof of Theorem~\ref{t:main} itself to conclude the same stratification 
result for the free boundary of a solution $u$. 
Note that, in particular, this claim holds for the area functional in the Euclidean space 
(cf. \cite[Section~5 of Chapter~V]{KS} for the two dimensional case, and \cite{Ca77}).

\subsection{The area functional in a Riemannian manifold}
Similarly, we would like to discuss here the case of the
obstacle problem for the area functional in a Riemannian manifold,
that naturally enters in several geometric applications
(cf., \textit{e.g.}, \cite{MoSp}). Indeed, to the best of our knowledge
a comprehensive stratification result of the free boundary points
in this case has not appeared elsewhere.
Since we are aimed here for a local regularity result,
we assume that
\begin{itemize}
\item[(M1)] our manifold is parametrized
by a single chart ${\Sigma}:= B_{r_0}^{n} \times (-r_0,r_0) \subset \R^{n} \times \R$,
for some $r_0>0$;

\item[(M2)] the metric tensor $g$ satisfies $g(0) = I$ and 
$\nabla g(0) = 0$ (where $\nabla$ denotes the Levi-Civita connection);
 
\item[(M3)] the obstacle $\psi \in C^{1,1}(B_{r_0}^{n},(-r_0,r_0))$
with $\psi(0) = |\nabla \psi(0)| = 0$;
\end{itemize}
We consider the following obstacle problem:
\begin{equation}\label{e:min rieman}
\min_{v\in {\mathbb{K}_{\psi,g}}} \; \textup{vol}_g \big(\graph(v)\big),
\end{equation}
where ${\mathbb{K}_{\psi,g}}:= \big\{v \in C^{0,1}(B_{r_0}^{n},(-r_0, r_0))
\,:\, v\geq \psi, \, v\vert_{\de B_{r_0}^n} = g\big\}$ for
some $g\in C^{0,1}(\de B_{r_0}^n)$ with $g \geq \psi\vert_{\de B_{r_0}^n}$,
$\graph(v) := \big\{ (x,v(x))\,:\, x \in B_{r_0}^n \big\} \subset \R^n \times \R$
and $\textup{vol}_g \big(\graph(v)\big)$ is the area ($n$-dimensional
measure) of the
Lipschitz submanifold associated to the graph of $v$.
In local coordinates, one can express the area of $\graph(u)$
in the following way: let ${G}:B_{r_0}^n \to \Sigma$ be given by
${G}(x) = \big(x, u(x)\big)$ and
\[
J{G}(x) := \sqrt{\textup{det} \big(D{G}(x)^T g({G}(x)) D{G}(x) \big)};
\]
then
\[
\textup{vol}_g \big(\graph(u)\big) = \int_{B_{r_0}^n} J{G}(x)\,dx.
\]
More explicitly, the matrix $M(x) := D{G}(x)^T g ({G}(x)) D{G}(x)$
has entries for $i,j=1,\ldots,n$
\[
M_{ij}(x) := g_{ij}\big(x,u(x)\big) + g_{j(n+1)}\big(x,u(x)\big)\,\de_i u(x)
+ g_{i(n+1)}\big(x,u(x)\big)\,\de_j u(x)+g_{(n+1)(n+1)} \de_i u(x) \,\de_ju(x).
\]
As for the case of a flat metric, the existence of solutions to
\eqref{e:min rieman} is not always
guaranteed and several conditions for it should be verified.
However we do not investigate this problem in the present note,
but we assume that we are given a solution $u\in C^{0,1}(B_{r_0}^{n},(-r_0, r_0))$
and moreover we assume that
\begin{itemize}
\item[(M4)] $u \in C^{1,\alpha}(B_{r_0}^{n},(-r_0, r_0))$ for some $\alpha>0$,
and $u(0) = |\nabla u(0)| = 0$.
\end{itemize}

\begin{remark}
A comment regarding the assumption (M4) is in order.
The natural setting for the study of obstacle problems in Riemannian
manifolds is that of the so called ``parametric minimal surfaces''
theory, \textit{i.e.}~the theory of Caccioppoli sets minimizing
the perimeter among all sets which contain (or are contained in)
a given obstacle.
In this setting the existence issue for the obstacle problem is a simple
consequence of the compactness property of Caccioppoli sets, although in general 
the graphical property would not be ensured.

On the other hand, around points of the free boundary of the
solutions it is simple to check that one can choose normal
coordinates in such a way that hypotheses (M1)--(M4) are matched.
In particular, the hypothesis (M4) is a consequence of the
\textit{almost minimizing} property of the solutions to the parametric obstacle
problem and of a Bernstein theorem (cf. \cite[Section~6.1.2]{MoSp} and
\cite{Tamanini84}), and therefore it is not restrictive to assume it.
\end{remark}

In order to better understand the structure of the area functional,
we can follow the strategy in \cite{MoSp} and look at the first variations of the functional
\begin{equation}\label{e:first variations}
\frac{d}{d\eps}\Big\vert_{\eps = 0^+} \textup{vol}_g \big(\graph(u+\eps \phi)\big)\geq 0,
\end{equation}
for every $\phi \in C^{\infty}_c(B_{r_0}^n)$ such that $\phi\vert_{\Lambda_u} \geq 0$
where $\Lambda_u := \{ u = \psi\}$.
By following the computations in \cite{MoSp} we infer that
the inequality \eqref{e:first variations} reads as
\begin{equation}\label{e:first variations2}
\int_{B_{r_0}^n} \phi\,Lu\, dx \leq 0 \quad \forall \; 
\phi\in C^{\infty}_c(B_{r_0}^n),\; \phi\vert_{\Lambda_u} \geq0,
\end{equation}
where 
\[
L u (x) := \textup{div} \Big( A\big(x, u(x), \nabla u (x)\big)\nabla u(x)
+ b\big(x, u(x), \nabla u (x) \big)\Big) - f\big(x,u(x), \nabla u(x)\big),
\]
and $A$, $b$ and $f$ are given by the following formulas (the Einstein
convention of repeated indices is consistently employed in the sequel):
\begin{itemize}
\item[(1)] $A = (a^{ij})_{i,j=1,\ldots,n} : B_{r_0}^n \times (-r_0,r_0) \times \R^n
\to \R^{n\times n}$ is given by
\[
a^{ij}(x, z, \xi) := g_{(n+1)(n+1)}(x,z) h^{ij}(x, z, \xi),
\]
and $(h^{ij})_{i,j=1,\ldots,n}$ is the inverse of the 
matrix $(h_{ij})_{i,j=1,\ldots,n}$ with
\begin{align*}
h_{ij}(x,z,\xi) &:= g_{ij}(x,z) + \xi_i\,g_{j(n+1)}(x,z) + 
\xi_j\,g_{(n+1)i}(x,z)\\
&\quad  + \xi_i\,\xi_j\,g_{(n+1)(n+1)}(x,z)
\quad\quad\quad \forall\;i,j=1,\ldots, n,
\end{align*}
(note that $(h_{ij})_{i,j=1,\ldots,n}$ is non-singular
for small enough $|x|,|z|,|\xi|$);

\item[(2)] $b=(b^i)_{i=1,\ldots,n} : B_{r_0}^n \times (-r_0,r_0) \times \R^n \to \R^n$
is given by
\[
b^i(x, z, \xi) := g_{j(n+1)}(x,z) h^{ji}(x,z,\xi);
\]

\item[(3)] $f:B_{r_0}^n \times (-r_0,r_0) \times \R^n \to \R$ is given by
\begin{align*}
f(x,z,\xi) & := h^{ij}\,\xi_i\, \Gamma_{(n+1)(n+1)}^k\,g_{jk} +
h^{ij}\,\xi_j\,\xi_i \, \Gamma_{(n+1)(n+1)}^k\,g_{k(n+1)}\\
& \quad + h^{ij}\,\Gamma_{i(n+1)}^k\,g_{jk} + h^{ij}\,\xi_j\,\Gamma_{i(n+1)}^k\,g_{k(n+1)},
\end{align*}
where to simplify the notation we have
written $h^{ij} = {h}^{ij}\big(x,z, \xi\big)$, $g_{ij}= g_{ij}\big(x, z\big)$ and $\Gamma_{ij}^k = 
\Gamma_{ij}^k\big(x,z \big)$ denote the Christoffel symbols.
\end{itemize}

Note that \eqref{e:first variations2} reads as a differential inequality
of the form \eqref{e:mainpb} where
\[
\aaa(x,z,\xi) = A(x,z,\xi)\xi + b(x,z,\xi)
\quad\text{and}\quad
a_0(x,z,\xi) = f(x,z,\xi).
\]
We now verify that there exists $s_0 <r_0$ such that
$\aaa$ and $a_0$ above satisfy the conditions of
Theorem~\ref{t:optreg} as long as $|x|+|z|+|\xi| < s_0$,
\textit{i.e.}~(H1) with (iii)$^\prime$ replacing (iii)
and $p=2$, (H2)$^\prime$ for $p=2$.

For what concerns (H1), we note that $\aaa$ and $a_0$ are smooth functions
in their domains and therefore (i), (ii) and (iii)$^\prime$
clearly follows for $|x|+|z|+|\xi| < s_0$ after choosing $\phi_1$ and
$\phi_2$ suitable constants.

Similarly, the upper bound of (H2)$^\prime$ follows from the regularity
of $\aaa$.
For what concerns the coercivity condition we start estimating as follows
(we write $h^{-1}$ for the inverse of the matrix $h=(h_{ij})$):
\begin{align}\label{e:coerc}
\big( \aaa(z,x,\xi) - \aaa(z,x,\eta) \big) \cdot (\xi - \eta) & =
\big(A(x,z,\xi)\xi - A(x,z, \eta) \eta \big) \cdot (\xi - \eta)\notag\\
&\quad+ \big(b(x,z,\xi) - b(x,z,\eta)\big) \cdot (\xi - \eta)\notag\\
& = g_{(n+1)(n+1)}(x,z) \big(h^{-1}(x,z,\xi)\xi - h^{-1}(x,z, \eta) \eta \big)
\cdot (\xi - \eta)\notag\\
& \quad + g_{j(n+1)}(x,z) \big(h^{ji}(x,z,\xi) - h^{ji}(x,z, \eta) \big)
\cdot (\xi_i - \eta_i).
\end{align}
Next note that, since $g(0) = \textup{I}$,
then for every $\kappa>0$ one can find $s_0$ sufficiently small
such that
\begin{equation}\label{e:secondo add}
\big\vert g_{j(n+1)}(x,z) \big(h^{ji}(x,z,\xi) - h^{ji}(x,z, \eta) \big)
\cdot (\xi_i - \eta_i) \big\vert \leq \kappa \, |\xi-\eta|^2.
\end{equation}
On the other hand, we can estimate the first addendum in \eqref{e:coerc}
in the following way:
\begin{align}\label{e:primo add}
\big(h^{-1}(x,z,\xi)\xi - h^{-1}(x,z, \eta) \eta \big)
\cdot (\xi - \eta) & = h^{-1}(x,z,\xi) (\xi - \eta) \cdot (\xi
-\eta)\notag\\
&\quad + \big(h^{-1}(x,z,\xi) - h^{-1}(x,z, \eta) \big)\, \eta \cdot
(\xi - \eta).
\end{align}
We can use the fact that $h^{-1}(0,0,0)= \textup{I}$ and the regularity
of $h^{-1}$ to get that, if $|x|+|z|+|\xi| < s_0$ for some suitably
small $s_0$, then
\begin{align}\label{e:primo add2}
\big(h^{-1}(x,z,\xi)\xi - h^{-1}(x,z, \eta) \eta \big)
\cdot (\xi - \eta)
& \geq \frac{1}{2} \, |\xi - \eta|^2
- \big\vert h^{-1}(x,z,\xi) - h^{-1}(x,z, \eta) \big \vert\, |\eta|
\,|\xi - \eta|\notag\\
&\geq \Big(\frac{1}{2} - \Lip(h^{-1})\,s_0\Big) \, |\xi - \eta|^2.
\end{align}
Using the fact that $g_{(n+1)(n+1)}(0,0) = 1$, we then conclude the
lower bound in (H2)$^\prime$ by choosing a suitable $s_0$ fulfilling
all the requests above.
Note also that \eqref{e:Tmonotone} is also satisfied because $a_0$
does not depend on $z$.

Therefore, if we assume that {(H3) is satisfied}, in view of (M4) we can apply
Theorem~\ref{t:optreg} to $u\vert_{B_{s_0}^n}$, and deduce that our solution 
$u\vert_{B_{s_0}^n}$ has the optimal regularity $C^{1,1}(B_{s_0}^n)$.

Finally, we can consider the regularity of the free boundary of $u$ in $B_{s_0}^{n}$, which can be now obtained by the use of classical arguments.
Indeed, since now $u$ has second derivatives almost everywhere,
we can also rewrite the operator in the following form
(the convention of summation over repeated indices is used):
\begin{align}\label{e:operatore}
Lu & = c^{ij}\big(x, u(x), \nabla u(x)\big)\,\de_{ij}u + d\big(x, u(x), \nabla u(x)\big),
\end{align}
where
\[
c^{ij}(x, z, \xi) = \de_{\xi_i} \aaa_j (x, z, \xi)
\]
and
\[
d(x,z, \xi) = \textup{div}_x \aaa (x,z, \xi) + \de_{z} \aaa (x, z, \xi) \cdot \xi - a_0 (x, z, \xi).
\]
By a simple manipulation of the equation \eqref{e:pde} it follows then that
\begin{align}\label{e:pde rieman}
- c^{ij}\big(x, \psi(x),\nabla \psi(x)\big)& \de_{ij} \big(u(x) - \psi(x)\big)  = 
\Big(L\psi(x) + 
d\big(x, u(x),\nabla u(x)\big)-d\big(x, \psi(x),\nabla \psi(x)\big)\Big) \chi_{\{u>\psi\}}\notag\\
&\quad +
\Big(c^{ij}\big(x, u(x),\nabla u(x)\big) - c^{ij}\big(x, \psi(x),\nabla \psi(x)\big)\Big) \de_{ij}u(x)\, 
\chi_{\{u>\psi\}}.
\end{align}
Moreover, we also deduce from the regularity of $\aaa$ and $a_0$ that,
up to reducing eventually $s_0$, the function ${w}:=u-\psi$ satisfies the following obstacle problem
\begin{equation}\label{e:non-div}
\A^{ij}(x) \de_{ij}{w}(x) = q(x) \chi_{\{{w}>0\}},
\end{equation}
where the matrix field $\A^{ij}(x) = c^{ij}\big(x, \psi(x),\nabla \psi(x)\big)$ is uniformly elliptic, and
\begin{align*}
q(x) &= -L\psi(x) 
-\Big(d\big(x, u(x),\nabla u(x)\big)-d\big(x, \psi(x),\nabla \psi(x)\big)\Big) \chi_{\{u>\psi\}}\\
&\quad-
\Big(c^{ij}\big(x, u(x),\nabla u(x)\big) - c^{ij}\big(x, \psi(x),\nabla \psi(x)\big)\Big) \de_{ij}u(x)\, \chi_{\{u>\psi\}}.
\end{align*}
{By additionally assuming (H4), we have that $-L\psi(x) \ge c_0>0$ and}
$q >\sfrac{c_0}{2} >0$. Furthermore, if the obstacle $\psi \in C^{2,\alpha}$ 
for some $\alpha>0$ then $q \in C^{0,\alpha}$ (where, for the last claim, the 
Schauder estimates for the second derivatives of $w$ in $\{w>0\}$ are used 
(cf.~\cite[Theorem~6.2]{GT}), and the regularity of $u$ which implies that 
$|\nabla u(x) - \nabla \psi(x)| \leq C\, \textup{dist}(x, \{u=\psi\})$).

Now, by using the regularity results for such obstacle problem in \cite{Ca77, Monneau} we can easily conclude the following final result.

\begin{theorem}\label{t:main2}
Let $(\Sigma, g)$ be a Riemannian manifold satisfying
conditions (M1) and (M2), and let $u$ be satisfying (M4) and be a solution to the
obstacle problem for the area functional with respect to an obstacle 
$\psi \in C^{2,\alpha}(B_{r_0}^{n},(-r_0,r_0))$ {satisfying (M3) and such 
that $-L\psi(x)\ge c_0>0$}.

Then, there exists $s_0>0$ such that
$u\in C^{1,1}(B_{s_0}^n,(-r_0,r_0))$ 
and the free boundary decomposes as $\de \{u = \psi\} \cap B_{s_0}^n = \Reg(u) \cup \Sing(u)$, 
where $\Reg(u)$ and $\Sing(u)$ are called its regular and singular part, respectively. Moreover, 
$\Reg(u)\cap\Sing(u)=\emptyset$ and
\begin{itemize}
\item[(i)] $\Reg(u)$ is relatively open in $\de \{u = \psi\}$
and, for every point $x_0 \in \Reg(u)$, there exist $r=r(x_0)>0$
and $\beta= \beta(x_0) \in (0,1)$ such that
$\Reg(u) \cap B_r(x_0)$ is a $C^{1,\beta}$ submanifold of dimension $n-1$;
\item[(ii)] $\Sing(u) = \cup_{k=0}^{n-1} S_k$, with $S_k$ contained in the union of
at most countably many submanifolds of dimension $k$ and class $C^1$.
\end{itemize}
\end{theorem}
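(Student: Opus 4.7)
The plan is to reduce the obstacle problem for the area functional to a classical linear obstacle problem with Lipschitz elliptic coefficients and strictly positive Hölder source, and then to invoke either Theorem~\ref{t:mainFGS} or the Caffarelli--Monneau free boundary theory (\cite{Ca77,Monneau}). All the necessary ingredients have been collected in the discussion preceding the statement, so the proof essentially consists in assembling them.

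\emph{Step 1 (optimal regularity).} The verification of (H1) with (iii)$^\prime$ in place of (iii), of (H2)$^\prime$ and of \eqref{e:Tmonotone} for $p=2$ has already been carried out in the paragraphs above: it follows from the smoothness of $A$, $b$ and $f$ together with the normalizations $g(0)=\mathrm{I}$, $\nabla g(0)=0$ in (M2), $\psi(0)=|\nabla\psi(0)|=0$ in (M3), and $u(0)=|\nabla u(0)|=0$ in (M4), provided $s_0$ is chosen small enough. Since (M3) supplies (H3), Theorem~\ref{t:optreg} applies to $u\vert_{B_{s_0}^n}$ and produces $u\in C^{1,1}(B_{s_0}^n)$.

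\emph{Step 2 (linearization at the obstacle).} Once $u\in C^{1,1}$ is in hand, the second-derivative form of the Euler--Lagrange equation yields the non-divergence form obstacle equation \eqref{e:non-div},
\[
\A^{ij}(x)\,\partial_{ij}w(x)=q(x)\,\chi_{\{w>0\}}(x), \qquad w:=u-\psi,
\]
where $\A^{ij}(x):=c^{ij}\bigl(x,\psi(x),\nabla\psi(x)\bigr)$ is uniformly elliptic and, thanks to $\psi\in C^{2,\alpha}$, of class $C^{1,\alpha}_{loc}$. I would then shrink $s_0$ so that the contribution $d(x,u,\nabla u)-d(x,\psi,\nabla\psi)$ and the term $\bigl(c^{ij}(x,u,\nabla u)-c^{ij}(x,\psi,\nabla\psi)\bigr)\partial_{ij}u$ in the definition of $q$ are uniformly small on $B_{s_0}^n$; combined with the standing assumption $-L\psi\ge c_0>0$ this gives $q\ge c_0/2>0$. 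Hölder continuity of $q$ is obtained by combining the $C^{1,\alpha}$ regularity of $\aaa,a_0,\psi$ with interior Schauder estimates for $w$ in the open set $\{w>0\}$ and the elementary gradient-vanishing bound $|\nabla u-\nabla\psi|\le C\,\mathrm{dist}(\cdot,\{u=\psi\})$ that follows from $\{u=\psi\}\subset\{\nabla u=\nabla\psi\}$ and $u,\psi\in C^{1,1}$.

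\emph{Step 3 (free boundary stratification).} Using the $C^{1,\alpha}$ regularity of $\A^{ij}$ one rewrites \eqref{e:non-div} in divergence form $\div(\A\nabla w)=\tilde q\,\chi_{\{w>0\}}$, where $\tilde q=q+(\partial_j\A^{ij})\partial_i w$ is still Hölder continuous and, since $\nabla w=0$ on $\{w=0\}$, still bounded below by $c_0/3$ near the free boundary (after a further shrinking of $s_0$ if necessary). At this point $w$ is the minimizer of a quadratic energy of the type considered in Theorem~\ref{t:mainFGS} on $B_{s_0}^n$, with zero obstacle, source $\tilde q$ and Dirichlet datum $w\vert_{\partial B_{s_0}^n}$; the conclusions of that theorem — identical in wording to the stratification asserted in (i)--(ii) — transfer immediately to $\partial\{u=\psi\}\cap B_{s_0}^n=\partial\{w=0\}\cap B_{s_0}^n$.

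The hardest step is the second one: one must verify that $q$ inherits enough regularity from the equation, even though its definition involves $\partial_{ij}u$ (which a priori is only $L^\infty$); the point is that this term is multiplied by $c^{ij}(x,u,\nabla u)-c^{ij}(x,\psi,\nabla\psi)$, whose Lipschitz dependence on $(z,\xi)$ together with the distance bound $|u-\psi|+|\nabla u-\nabla\psi|\le C\,\mathrm{dist}(\cdot,\{u=\psi\})^{1+\alpha/?}$ close to the contact set is exactly what is needed to promote $q$ from $L^\infty$ to $C^{0,\alpha}$ and to keep it uniformly bounded away from zero.
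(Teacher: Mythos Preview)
Your argument tracks the paper's own through Steps~1 and~2: the verification of the structural hypotheses leading to $u\in C^{1,1}(B_{s_0}^n)$ via Theorem~\ref{t:optreg}, and the derivation of the non-divergence obstacle equation \eqref{e:non-div} with $q\in C^{0,\alpha}$ and $q\ge c_0/2$, are exactly what the paper does in the paragraphs preceding the statement. In Step~3 you diverge slightly: the paper stops at \eqref{e:non-div} and invokes the Caffarelli--Monneau theory \cite{Ca77,Monneau} directly for non-divergence obstacle problems, whereas you pass back to divergence form $\div(\A\nabla w)=\tilde q\,\chi_{\{w>0\}}$ and quote Theorem~\ref{t:mainFGS}. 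Both routes are legitimate; yours has the virtue of staying inside the paper's own toolbox, at the cost of one extra manipulation (and of noting that $\A^{ij}=c^{ij}(\cdot,\psi,\nabla\psi)$ is symmetric, which holds because the area functional is variational so $c^{ij}=\partial^2_{\xi_i\xi_j}F$). One correction: in your final paragraph the distance bound should simply read $|\nabla u-\nabla\psi|\le C\,\mathrm{dist}(\cdot,\{u=\psi\})$ with exponent~$1$ (coming from $u,\psi\in C^{1,1}$), exactly as you already wrote in the body of Step~2; there is no additional power of~$\alpha$. The H\"older continuity of the delicate term $(c^{ij}(x,u,\nabla u)-c^{ij}(x,\psi,\nabla\psi))\,\partial_{ij}u\,\chi_{\{u>\psi\}}$ then follows by combining this linear vanishing with the scaled interior Schauder estimate $[D^2u]_{C^{0,\alpha}(B_{d(x)/4}(x))}\le C\,d(x)^{-\alpha}$, where $d(x)=\mathrm{dist}(x,\{u=\psi\})$.
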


{
\begin{remark}
Recalling that {the operator $L$ is the first variation of the area functional}, the condition 
(H4) can be read as the geometric property of the obstacle $\psi$ of having the mean curvature vector 
``pointing downward'', i.e. on the opposite side with respect to the graph of $u$. 
\end{remark}
}

\section*{Acknowledgments} 
M.F. thanks Guido De Philippis for interesting conversations on the subject of the paper.

\end{document}